\def\EMAIL#1{\href{mailto:#1}{#1}}
\def\URL#1{\href{#1}{#1}}         
\newcommand{\numberthis}{\addtocounter{equation}{1}\tag{\theequation}}
\newcommand{\tth}{{}^{\text{th}}}
\newcommand{\defn}{\stackrel{\triangle}{=}}
\newcommand{\dto}{\downarrow}
\newcommand{\amax}{A_{\max}}
\newcommand{\smax}{S_{\max}}
\newcommand{\bE}{\mathbb{E}}
\newcommand{\bP}{\mathbb{P}}
\newcommand{\bR}{\mathbb{R}}
\newcommand{\bZ}{\mathbb{Z}}
\newcommand{\cF}{\mathcal{F}}
\newcommand{\cX}{\mathcal{X}}
\newcommand{\va}{\boldsymbol{a}}
\newcommand{\ve}{\boldsymbol{e}}
\newcommand{\vq}{\boldsymbol{q}}
\renewcommand{\vs}{\boldsymbol{s}}
\newcommand{\vu}{\boldsymbol{u}}
\newcommand{\vx}{\boldsymbol{x}}
\newcommand{\vy}{\boldsymbol{y}}
\newcommand{\vone}{\boldsymbol{1}}
\newcommand{\vzero}{\boldsymbol{0}}
\newcommand{\qbar}{\overline{q}}
\newcommand{\abar}{\overline{a}}
\newcommand{\sbar}{\overline{s}}
\newcommand{\ubar}{\overline{u}}
\newcommand{\vqbar}{\overline{\vq}}
\newcommand{\vabar}{\overline{\va}}
\newcommand{\vsbar}{\overline{\vs}}
\newcommand{\vubar}{\overline{\vu}}
\newcommand{\Xbar}{\overline{X}}
\newcommand{\E}[1]{\bE\left[#1 \right]}
\newcommand{\Var}[1]{\text{Var}\left[#1 \right]}
\newcommand{\Prob}[1]{\bP\left[#1\right]}
\newcommand{\ind}[1]{\mathbbm{1}_{\left\{#1 \right\}}}
\newcommand{\expo}[1]{\exp\left({\scriptstyle #1}\right)}
\newcommand{\Eq}[1]{\bE_{\vq}\left[#1 \right]}
\newcommand{\pN}{{(N)}}
\newcommand{\thetahat}{\tilde{\theta}}
\newcommand{\pushright}[1]{\ifmeasuring@#1\else\omit\hfill$\displaystyle#1$\fi\ignorespaces}
\begin{document}

\TITLE{Load balancing under Join the Shortest Queue: Many-Server Heavy-Traffic Asymptotics}

\ARTICLEAUTHORS{%
	\AUTHOR{Daniela Hurtado-Lange}
	\AFF{Georgia Institute of Technology, \EMAIL{d.hurtado@gatech.edu}, \URL{https://sites.google.com/view/daniela-hurtado-lange}}
	\AUTHOR{Siva Theja Maguluri}
	\AFF{Georgia Institute of Technology, \EMAIL{siva.theja@gatech.edu}, \URL{https://sites.google.com/site/sivatheja/}} }

\ABSTRACT{
We study the load balancing system operating under Join the Shortest Queue (JSQ) in the many-server heavy-traffic regime. If $N$ is the number of servers, we let the difference between the total service rate and the total arrival rate be $N^{1-\alpha}$ with $\alpha>0$. We show that for $\alpha>4$ the average queue length behaves similarly to the classical heavy-traffic regime. Specifically, we prove that the distribution of the average queue length multiplied by $N^{1-\alpha}$ converges to an exponential random variable. Moreover, we show a result analogous to state space collapse. We provide two proofs for our result: one using the one-sided Laplace transform, and one using Stein's method. We additionally obtain the rate of convergence in the Wasserstein's distance. }

\KEYWORDS{ Many-Server Heavy-Traffic; Load Balancing System; Stein's Method; Transform methods; State Space Collapse;  Drift method;  Lyapunov drift}

\maketitle

\section{Introduction}

\label{sec:introduction}

The load balancing system, also known as the supermarket checkout system, is a parallel-server stochastic processing network (SPN), where each server has a separate queue. There is a single stream of arrivals that are routed to the queues by a single dispatcher. This model has been extensively studied since the '70s, when the celebrated join the shortest queue (JSQ) routing policy was introduced \cite{winston_JSQ_1977}. Under JSQ, the new arrivals are routed to the server with the least number of jobs (including the one in service, if any). It has been proved that JSQ minimizes delay under different notions of optimality in both, continuous and discrete time settings \cite{winston_JSQ_1977,weber1978optimal,ephremides1980simple,JSQ_HT_optimality,atilla}. However, understanding the probabilistic behavior of the queue lengths in steady state usually becomes intractable. Then, a common practice is to study the system under some asymptotic regime.

A popular regime is heavy traffic, where the number of servers is fixed and the arrival rate is increased to the maximum capacity. One of the advantages of this regime is that many systems behave as lower-dimensional systems in the limit, a phenomenon known as state space collapse (SSC). In particular, the load balancing system operating under JSQ experiences SSC into a one-dimensional subspace and, therefore, it behaves as a single-server queue in the limit. Heavy-traffic analysis of the load balancing system operating under JSQ has been performed in the past, and it has been proved that the vector of queue lengths can be well-approximated by a vector of the form $\Upsilon \vone$, where $\Upsilon$ is an exponential random variable. There are multiple proofs for this result, including the diffusion limits approach \cite{JSQ_HT_optimality}, the drift method \cite{atilla} and transform techniques based on the drift method \cite{Hurtado_transform_method}. 

Another popular asymptotic regime is mean field, where the number of servers increases to infinity while the load is kept constant. This regime has been used in the past to study the load balancing system operating under power-of-$d$ choices (also called JSQ$(d)$), under which $d$ queues are selected at random and the new arrivals are routed to the shortest queue among these. It has been proved that, even if $d=2$, the queue lengths decrease considerably when compared to random routing \cite{mitzenmacher_po2,mitzenmacher_po2_2,dobrushin_po2}. To prove this result, the main idea is that, since only $d$ queues are being compared whenever there is an arrival, and $d$ is small compared to the number of servers, in the mean-field limit the queues are almost independent. However, this argument does not work if we use JSQ for routing.

Regimes in between these two are when the load and the number of servers increase together, and there are several ways to take this type of limit. We call these regimes many-server heavy-traffic. In this paper, we focus on the case where the difference between the \emph{total} service rate and the \emph{total} arrival rate is of the form $N^{1-\alpha}$, where $N$ is the number of servers and $\alpha>0$ is a parameter. Equivalently, we can define the difference between service and arrival rates \emph{per server} as $N^{-\alpha}$. Then, as $N\to\infty$ the number of servers and the load increase. 

Depending on the value of $\alpha$, the behavior of the load balancing system is different. First observe that if $\alpha\to\infty$, we are heuristically approaching the classical heavy-traffic regime. Similarly, if $\alpha\dto 0$ we heuristically approach the mean-field regime. There are known phase transitions at $\alpha=1$ and $\alpha=\frac{1}{2}$. If $\alpha=1$ we are in the Nondegenerate Slowdown (NDS) regime \cite{Atar_NDS}, if $\alpha=\frac{1}{2}$ we are in the Halfin-Whitt regime \cite{HalfinWhitt_Regime} and if $\alpha<\frac{1}{2}$ we are in sub-Halfin-Whitt regime. Load balancing systems in the many-server heavy-traffic limit have been extensively studied in the literature(see for example the survey paper \cite{load_balancing_survey}). However, the study of load balancing systems operating under JSQ in the many-server heavy-traffic limit started recently, and it has gained plenty of attention \cite{Gamarnik_JSQ,Braverman2020_jsq,BanMukh_JSQ_tail_asymptotics,BanMukh_JSQ_sensitivity,liu2018simple,LiuLei_JSQ_UniversalScaling,GupWal_NDS_JSQ}. In \cite{Gamarnik_JSQ} a load balancing system under JSQ is studied for the first time in the many-server heavy-traffic limit. The authors show that, in the Halfin-Whitt regime, the number of empty queues and the number of queues with one customer in line are of order $O(\sqrt{N})$. The authors use the diffusion limits approach, but interchange of limits is not proved. This step is completed in \cite{Braverman2020_jsq}. In \cite{BanMukh_JSQ_tail_asymptotics,BanMukh_JSQ_sensitivity} the work of \cite{Gamarnik_JSQ} is continued. Specifically, in \cite{BanMukh_JSQ_tail_asymptotics} the authors study tail asymptotics of the stationary distribution, and in \cite{BanMukh_JSQ_sensitivity} they study the moments of the stationary distribution. In \cite{liu2018simple} load balancing systems under several routing policies are studied in the sub-Halfin-Whitt regime, and in \cite{LiuLei_JSQ_UniversalScaling} the analysis is extended to the case when $\alpha\in\left[\frac{1}{2},1\right)$. In \cite{Weina_pod_zero_delay} a load balancing system operating under power-of-$d$, where jobs are batches of tasks, is analyzed. Specifically, the authors find conditions on the value of $d$ (as a function of the number of servers, the load and the number of tasks per job) such that power-of-$d$ choices achieves zero delay in sub-Halfin-Whitt regime. In \cite{GupWal_NDS_JSQ}, load balancing systems are studied in the NDS regime. A summary of these results depending on the value of $\alpha$ is presented in Table \ref{future.work.table:jsq.alpha}.

\begin{table}
	\centering
	\caption{Literature review for asymptotic regimes depending on the value of $\alpha$.}\label{future.work.table:jsq.alpha}
	\begin{tabular}{|l|l|l|}
		\hline
		Value of $\alpha$ & Regime & References \\ \hline
		$\alpha\dto 0$ & Mean-field & \\ \hline
		$\alpha\in\left(0,\frac{1}{2}\right)\vphantom{\dfrac{num}{den}}$ & Sub-Halfin-Whitt & \cite{liu2018simple,Weina_pod_zero_delay} \\ \hline
		$\alpha=\frac{1}{2}\vphantom{\dfrac{num}{den}}$                  & Halfin-Whitt & \cite{Gamarnik_JSQ,Braverman2020_jsq,BanMukh_JSQ_tail_asymptotics,BanMukh_JSQ_sensitivity} \\ \hline
		$\alpha\in\left(\frac{1}{2},1\right)\vphantom{\dfrac{num}{den}}$ & & \cite{LiuLei_JSQ_UniversalScaling} \\ \hline
		$\alpha=1$  & Nondegenerate Slowdown (NDS) & \cite{GupWal_NDS_JSQ} \\ \hline
		$\alpha\in(1,2]$                 &  Future work & \\ \hline
		$\alpha\in(2,\infty)$                 &  This paper & \\ \hline
		$\alpha\to\infty$ & Classical heavy-traffic & \cite{atilla} \\ \hline
	\end{tabular}
\end{table}

Our main contribution is to show that if $\alpha>4$ the behavior of the average queue length in the load balancing system operating under JSQ in the many-server heavy-traffic regime is similar to its behavior in the classical heavy-traffic regime. Specifically, we prove that the distribution of the average queue length is exponential (Theorem \ref{thm}), which is the same distribution as in heavy-traffic regime. We also compute the rate of convergence in Wasserstein's distance (Theorem \ref{thm:Stein}).
Moreover, we prove a result that is analogous to SSC in the classical heavy-traffic regime (Proposition \ref{prop:ssc}) and, consequently, we also call it SSC. Specifically, our SSC result shows that the number of jobs in line is similar for all the queues as $N$ gets large. We prove this result by showing that the error of approximating the actual queue length by the average queue length becomes negligible in the limit.

Our goal in this paper is to collaborate to closing the gap between the NDS and the classical heavy-traffic regimes. We find the distribution of the total queue lengths for $\alpha>4$, but there is still a gap between our result and NDS regime, that is, for $\alpha\in(1,4]$. Exploring this gap is future work.

To the best of our knowledge, we are the first ones to study the load balancing system operating under JSQ under this regime. We provide two proofs of the theorem: one using the transform method introduced in \cite{Hurtado_transform_method}, and one using Stein's method \cite{Ross2011_SteinSurvey,gurvich2014diffusion,braverman2017stein,braverman2017stein2,Walton_SteinHT}. We briefly describe each of the methods below.

The transform method introduced in \cite{Hurtado_transform_method} is a two-step procedure to compute the distribution of the queue lengths in classical heavy-traffic regime, and it can be used for queueing systems that experience SSC to a one-dimensional subspace. The method is introduced in the context of a load balancing system and a generalized switch. Before using the method, positive recurrence and SSC to a one-dimensional subspace must be proved. The main idea is to consider an exponential test function such that, after setting its drift to zero, yields the Moment Generating Function (MGF) of the projection of the vector of the queue lengths on the subspace where SSC occurs. Then, an implicit expression that is valid for all traffic is obtained. The last step is to take the heavy-traffic limit, and prove that the terms depending on the queue lengths vanish, so that we obtain an explicit expression for the limiting MGF. The authors in \cite{Hurtado_transform_method} work with the MGF, but the transform method they propose also works with other transforms, such as the one-sided Laplace transform or the characteristic function (also known as Fourier transform). In this paper we use the one-sided Laplace transform.

Stein's method is based on the approach introduced by \cite{stein1972bound}, and a survey about the main results can be found in \cite{Ross2011_SteinSurvey}. The main idea is to bound the Wasserstein's distance between the pre-limit variable and a random variable that follows the limiting distribution. Then, one obtains conditions on the parameters of the system such that this bound converges to zero. In \cite{gurvich2014diffusion} the method was developed for continuous time Markov chains, and it was first used in the context of SPNs in \cite{braverman2017stein,braverman2017stein2}. The method was developed there for single queues with many servers, with patient and impatient customers. Later, in \cite{Walton_SteinHT} the same method was used to study a single server queue in heavy-traffic. We use it here to bound the Wasserstein's distance between the total queue lengths and an exponential random variable.

The organization of this paper is as follows. In Section \ref{sec:model} we provide the details of the load balancing model and JSQ routing, and we state the main result of this paper. In Section \ref{sec:SSC} we formally show the notion of SSC described above. Then, in Section \ref{sec:MGF} we prove the main result using the transform methods proposed in \cite{Hurtado_transform_method} applied to the one-sided Laplace transform;  and in Section \ref{sec:Stein} we prove the result using Stein's method and we additionally provide the rate of convergence of the average queue length to the corresponding exponential random variable.

\subsection{Notation}

We use $\bR$ and $\bZ$ to denote the sets of real and integer numbers, respectively. We add a subscript $+$ when we refer to nonnegative numbers, and a superscript $n\in\bZ_+$ when we mean vector spaces. Given $n\in\bZ_+$, we use $[n]\defn \left\{i\in\bZ_+: i\leq n \right\}$. 
We use bold letters to denote vectors. Given two vectors $\vx,\vy\in\bR^n$, we use $\langle\vx,\vy\rangle$ to denote their dot product, and for $p\in\bZ_+$ with $p\geq 1$ we use $\|\vx\|_p$ to denote the $p$-norm of $\vx$. We use $\vone$ and $\vzero$ to denote the vectors of all ones and all zero elements, respectively. For $n\in\bZ_+$ and $i\in[n]$, we use $\ve^{(i)}$ to denote the $n$-dimensional $i\tth$ canonical vector, i.e., an $n$-dimensional vector with 1 in the $i\tth$ element and 0 everywhere else. 

Given a random variable $X$ we use $\E{X}$ to denote its expected value and $\Var{X}$ for its variance. For an event $A$ we use $\ind{A}$ to denote the indicator function of $A$. Given a random process $\{\vq(k): k\geq 1\}$ (that will be later defined as the queue lengths process), we use $\Eq{\cdot}\defn \E{\cdot | \vq(k)=\vq}$. We use $\Rightarrow$ to denote convergence in distribution. 

For a Markov chain $\{X(k):k\geq 1 \}$ with countable state space $\cX$ and a function $Z:\cX\to\bR_+$, define the drift of $Z$ at $x$ as
\begin{align*}
\Delta Z(x)\defn \left[Z(X(k+1))- Z(X(k)) \right]\ind{X(k)=x}.
\end{align*}
Thus, $\Delta Z(x)$ is a random variable that measures the amount of change in the value of $Z$ in one time slot.

For a function $f$ with domain $Dom(f)$ we denote $\|f\|\defn \sup_{x\in Dom(f)}|f(x)|$. Finally, we say that $f(N)$ is of order $o(g(N))$ if $\lim_{N\to\infty} \frac{f(N)}{g(N)}=0$.

\section{Model}\label{sec:model}

Consider a load balancing system, i.e., a queueing system with $N$ parallel servers, each of them with an infinite buffer. There is a single stream of arrivals to the system, and, upon arrival, the jobs are routed to the queues by a dispatcher. After being routed, the jobs cannot move from one queue to another, and they wait in line until the server processes them.

We model the system in discrete time (i.e., in a time-slotted fashion), and we index time by $k\in\bZ_+$. For each $k\in \bZ_+$ and $i\in[N]$, let $q_i(k)$ be the number of jobs in queue $i$ at the beginning of time slot $k$ (including the job in service, if any). Let $\left\{a(k):k\in\bZ_+ \right\}$ be a sequence of i.i.d. random variables such that $a(k)$ is the number of arrivals to the system in time slot $k$. Let $\va(k)$ be a vector where $a_i(k)$ is the number of jobs that are routed to queue $i$ in time slot $k$, for $i\in[N]$.  We assume that all the arrivals in one time slot are routed to the same queue. Then, if the dispatcher routes the arrivals to queue $i^*$, we have $\va(k)= a(k)\ve^{(i^*)}$. Let $\left\{\vs(k):k\in\bZ_+ \right\}$ be a sequence of i.i.d. random vectors such that $s_i(k)$ is the potential number of jobs that can be processed by server $i$ in time slot $k$. If $s_i(k)$ is larger than the number of jobs in queue $i$ (including the arrivals in time slot $k$), then there is unused service, that we denote by $u_i(k)$. Assume that the sequences $\{a(k):k\in\bZ_+\}$ and $\{s_i(k):k\in\bZ_+ \}$ for each $i\in[N]$ are independent of each other, and of the queue length process $\{\vq(k):k\in\bZ_+\}$. Additionally, let $\amax$ and $\smax$ be finite constants such that $a(1)\leq N\amax$ and $s_i(1)\leq \smax$ for all $i\in[N]$ with probability 1.

Routing occurs according to JSQ. Then, the arrivals in time slot $k$ are routed to the $i^*\tth$ queue, where
\begin{align*}
i^*(k)\in \argmin_{i\in[N]} q_i(k),
\end{align*}
breaking ties at random.

In each time slot, the order of events is as follows. First, the queue lengths are observed, second, arrivals occur, third, arrivals are routed according to JSQ and, at the end of each time slot, service occurs. Then, the dynamics of the queues are described by the equation
\begin{align}\label{eq:dynamics.max}
q_i(k+1) &= \max\left\{q_i(k)+a_i(k)-s_i(k),0 \right\}\quad\forall i\in[N],\;\forall k\in\bZ_+,
\end{align}
which, by definition of unused service, is equivalent to 
\begin{align}\label{eq:dynamics.u}
q_i(k+1) &= q_i(k)+a_i(k)-s_i(k)+u_i(k)\quad\forall i\in[N],\;\forall k\in\bZ_+.
\end{align}
From the dynamics of the queues, it is easy to prove that
\begin{align}\label{eq:qu}
q_i(k+1)u_i(k)=0 \quad\forall i\in[N],\;\forall k\in\bZ_+
\end{align}
with probability 1. Intuitively, if there is unused service in queue $i$ in time slot $k$ ($u_i(k)>0$), the potential service is larger than the number of jobs in line available to be served (i.e., $q_i(k)+a_i(k)<s_i(k)$). Then, the queue should be empty in time slot $k+1$. Therefore, $u_i(k)>0$ implies $q_i(k+1)=0$. Similarly, $q_i(k+1)>0$ implies $u_i(k)=0$.

For each $i\in[N]$, assume $\E{s_i(1)}=1$ and $\Var{s_i(1)}=\sigma_s^2$. We are interested in the many-server heavy-traffic limit, so we parametrize the system by the number of servers $N$ in the following way. We add a superscript $(N)$ to the variables when we refer to the parametrized system. Let $\lambda^\pN \defn \E{a^\pN(1)} = N\left(1-N^{-\alpha}\right)$, where $\alpha>0$ and $\Var{a^\pN(1)}=N\sigma_a^2$. Observe that
\begin{align}\label{eq:epsilon}
\sum_{i=1}^N \E{s_i(1)}- \E{a^\pN(1)} = N^{1-\alpha},
\end{align}
which is positive. Therefore, the Markov Chain $\left\{\vq^\pN(k):k\in\bZ_+ \right\}$ is positive recurrent \cite[Lemma 2]{atilla}. Assume $\Prob{a(k)-s_i(k)=0}>0$ for some $i\in[N]$. Then, $\left\{\vq^\pN(k):k\in\bZ_+ \right\}$ is also aperiodic because  $\Prob{\vq(k)=\vq(k+1)}>0$. Then, the vector of queue lengths converges in distribution to a steady-state random vector, that we denote $\vqbar^\pN$. Let $\abar^\pN$ be a steady-state random variable with the same distribution as $a^\pN(1)$ and $\vsbar$ be a steady-state random vector with the same distribution as $\vs(1)$. Let $\vabar^\pN$ be the vector of arrivals after routing in steady state, given that the queue lengths are $\vqbar^\pN$ and $\abar^\pN$ jobs arrive to the system, and let $\vubar^\pN$ be the vector of unused service in steady-state, given $\vqbar^\pN$, $\abar^\pN$ and $\vsbar^\pN$. Define $\left(\vqbar^\pN\right)^+\defn \vqbar^\pN + \vabar^\pN -\vsbar + \vubar^\pN$ as the vector of queue lengths one time slot after $\vqbar^\pN$ is observed, given $\abar^\pN$ and $\vsbar$. Then, from \eqref{eq:qu} we have $\left(\qbar_i^\pN\right)^+\ubar^\pN_i=0$ with probability 1 for all $i\in[N]$.

In the next sections we prove Theorem \ref{thm}, using two different approaches.

\begin{theorem}\label{thm}
	Consider a sequence of load balancing systems operating under JSQ, parametrized by $N$ as described above. If $\alpha>4$, then $ N^{-\alpha}\sum_{i=1}^N\qbar_i^\pN\Rightarrow \Upsilon$ as $N\to\infty$, where $\Upsilon$ is an exponential random variable with mean $\frac{\sigma_a^2+\sigma_s^2}{2}$.
\end{theorem}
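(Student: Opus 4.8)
The plan is to compute the limiting moment generating function (MGF) of the scaled total queue length and to recognize it as that of the claimed exponential, following the transform technique of \cite{Hurtado_transform_method_temp}. Write $Q_N\defn\sum_{i=1}^N\qbar_i^\pN$, $S_N\defn\sum_{i=1}^N\sbar_i^\pN$ and $U_N\defn\sum_{i=1}^N\ubar_i^\pN$, and let $Q_N^+$ denote the total one slot later. Summing the dynamics \eqref{eq:dynamics.u} over $i\in[N]$ and using that under JSQ all arrivals are routed to a single queue, so that $\sum_{i=1}^N\abar_i^\pN=\abar^\pN$, collapses the chain to the one-dimensional reflected recursion
\begin{align*}
Q_N^+ = Q_N + \abar^\pN - S_N + U_N,
\end{align*}
which is the dynamics of a single-server queue whose net input $\phi\defn\abar^\pN-S_N$ has mean $\E{\abar^\pN}-N=-N^{1-\alpha}$ and variance $N(\sigma_a^2+\sigma_s^2)$. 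The steady-state content of such a queue is of order $N^\alpha$, so $N^{-\alpha}$ is the natural scaling.

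Next I would take the test function $V(\vq)=\exp(\theta\langle\vq,\vone\rangle)$ with $\theta=\thetahat N^{-\alpha}$ for $\thetahat$ in a neighborhood of $0$, and set its drift to zero in steady state — legitimate once a geometric tail bound on $Q_N$ (obtainable from a Lyapunov argument) guarantees that the MGFs are finite and the drift is integrable. Using the recursion together with the independence of $\phi$ from $\vqbar^\pN$, the drift-zero identity factors as
\begin{align*}
0=\E{e^{\theta Q_N}\big(e^{\theta(\phi+U_N)}-1\big)}=\E{e^{\theta Q_N}}\big(M_N(\theta)-1\big)+T_N,
\end{align*}
where $M_N(\theta)\defn\E{e^{\theta\phi}}$ and $T_N\defn\E{e^{\theta Q_N}e^{\theta\phi}\big(e^{\theta U_N}-1\big)}$ is the unused-service correction.

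A Taylor expansion of $M_N$, controlling the third absolute moment of $\phi$ (which is $O(N^{3/2})$ since $\abar^\pN$ and the $\sbar_i^\pN$ are bounded), gives after substituting $\theta=\thetahat N^{-\alpha}$ and using $\alpha>\tfrac12$
\begin{align*}
M_N(\theta)-1=N^{1-2\alpha}\Big(-\thetahat+\tfrac{\thetahat^2}{2}(\sigma_a^2+\sigma_s^2)\Big)+o\!\left(N^{1-2\alpha}\right).
\end{align*}
The main obstacle is $T_N$. To leading order $e^{\theta U_N}-1\approx\theta U_N$ and $e^{\theta\phi}\approx1$, so the task reduces to showing $\E{e^{\theta Q_N}U_N}=N^{1-\alpha}(1+o(1))$. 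Here I would exploit the complementarity $\left(\qbar_i^\pN\right)^+\ubar_i^\pN=0$: unused service arises only at queues that empty, and on that event Proposition \ref{prop:ssc} forces all coordinates to be comparable, so that $Q_N$ is at most of order $N$ times the state-space-collapse error. Multiplied by $\theta=\thetahat N^{-\alpha}$ this vanishes precisely when $\alpha>2$, whence $e^{\theta Q_N}\to1$ on the support of $U_N$; combined with the throughput identity $\E{U_N}=N-\E{\abar^\pN}=N^{1-\alpha}$ this yields $T_N=\thetahat N^{1-2\alpha}(1+o(1))$. Quantifying the collapse error against the factor $N^{-\alpha}$, and thereby isolating the threshold $\alpha>2$, is the delicate point of the argument.

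Finally I would divide the drift identity by $N^{1-2\alpha}$ and let $N\to\infty$. With $\Phi(\thetahat)\defn\lim_{N\to\infty}\E{e^{\thetahat N^{-\alpha}Q_N}}$, the surviving terms give
\begin{align*}
\Phi(\thetahat)\Big(-\thetahat+\tfrac{\thetahat^2}{2}(\sigma_a^2+\sigma_s^2)\Big)+\thetahat=0,
\qquad\text{i.e.}\qquad
\Phi(\thetahat)=\frac{1}{1-\tfrac{\sigma_a^2+\sigma_s^2}{2}\,\thetahat},
\end{align*}
the MGF of an exponential random variable with mean $\tfrac{\sigma_a^2+\sigma_s^2}{2}$; convergence of MGFs on a neighborhood of $0$ then upgrades to the claimed convergence in distribution. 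As an alternative route one can reach the same limit via Stein's method \cite{gurvich2014diffusion,braverman2017stein}, comparing $Q_N$ to the stationary law of the limiting reflected diffusion through its generator; this additionally produces the Wasserstein rate of Theorem \ref{thm:Stein}, and the unused-service/SSC estimate above is the common core of both proofs.
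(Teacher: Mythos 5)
Your proposal follows the same route as the paper's transform-method proof of Theorem \ref{thm} (via Theorem \ref{thm:MGF}): set the drift of the exponential test function to zero in steady state (justified by the MGF-existence Lemma \ref{lemma:existence-MGF}, proved by Foster--Lyapunov exactly as you anticipate), factor the net-input term using independence, Taylor-expand $M_N(\theta)-1$ to get $N^{1-2\alpha}\bigl(-\thetahat+\tfrac{\thetahat^2}{2}(\sigma_a^2+\sigma_s^2)\bigr)+o(N^{1-2\alpha})$, and solve for the MGF; your identity $0=\E{e^{\theta Q_N}}\bigl(M_N(\theta)-1\bigr)+T_N$ is an algebraic rearrangement of the paper's \eqref{eq:thm_fraction}, and your closing limit matches. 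You also correctly locate the crux in the unused-service correction $T_N$, correctly name the mechanism (complementarity $\bigl(\qbar_i^\pN\bigr)^+\ubar_i^\pN=0$ plus the SSC bound of Proposition \ref{prop:ssc}), and correctly predict the threshold $\alpha>2$.

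The genuine gap is that this crux --- $T_N=\thetahat N^{1-2\alpha}\bigl(1+o(1)\bigr)$, equivalently $\E{e^{\theta Q_N}U_N}=N^{1-\alpha}(1+o(1))$ --- is asserted rather than proved, and the heuristic ``$e^{\theta Q_N}\to 1$ on the support of $U_N$'' does not by itself deliver the expectation bound: $U_N$ and $\vqbar_\perp$ are dependent, and the factor $\expo{-\thetahat N^{1-\alpha}\qbar_{\perp i}}$ is unbounded, so a pointwise statement on $\{U_N>0\}$ cannot simply be passed inside the expectation at the required rate $o(N^{1-2\alpha})$. The paper devotes its key technical Lemma \ref{lemma:expo_qu} to exactly this. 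The ingredients are: the exact pathwise identity of Lemma \ref{lemma:qperp-u} (on $\{\ubar_i>0\}$ one has $\qbar_i^+=0$, hence $N^{-1}\sum_j\qbar_j^+=-\qbar_{\perp i}^+$, converting the total-queue exponential against $\ubar_i$ into a perpendicular-component exponential); two applications of H\"older's inequality; the bound $\E{\sum_i\ubar_i^p}\le\smax^{p-1}N^{1-\alpha}$ from Lemma \ref{lemma:Eu}; the all-moments SSC bound $\E{\|\vqbar_\perp\|^r}\le K(r)N^r$ of Proposition \ref{prop:ssc}; and a separate uniform-integrability-type Claim \eqref{claim:expo_qperp_constant} for the exponential factor, which itself leans on Lemma \ref{lemma:existence-MGF} because $K(r)$ grows like $C^r r!$, so one cannot just sum moments. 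The resulting error is of order $N^{3-\alpha(2+\frac{1}{p})}$, which is $o(N^{1-2\alpha})$ iff $\alpha>2p$, and letting $p\downarrow 1$ isolates the threshold $\alpha>2$ --- so your guess of where the condition enters is right, but all of the paper's real work lies in the step your proposal leaves as a program. (Your Stein's-method aside likewise matches the paper's second proof, which reuses the same complementarity/SSC estimate through the bound on $\E{\langle\vqbar_\parallel,\vubar_\parallel\rangle}$.)
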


Similarly to the classical heavy-traffic regime \cite{atilla,Hurtado_transform_method}, proving SSC and bounding the unused service are essential in the proof of Theorem \ref{thm}. Further computing the expected total unused service in steady state is essential in the proof of SSC and in the proof of Theorem \ref{thm}. We state the result below, as it will be repeatedly in the rest of this paper. 
\begin{lemma}\label{lemma:Eu}
	Consider a load balancing system operating under JSQ, parametrized by $N$ as described in Section \ref{sec:model}. Then,
	\begin{align*}
	\E{\sum_{i=1}^N \ubar_i^\pN} = N^{1-\alpha}.
	\end{align*}
\end{lemma}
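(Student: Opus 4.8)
The statement is a flow-balance (throughput) identity, and the plan is to derive it by summing the one-slot dynamics over all queues and exploiting stationarity. Starting from \eqref{eq:dynamics.u} evaluated in steady state, for each $i\in[N]$ we have $\left(\qbar_i^\pN\right)^+ = \qbar_i^\pN + \abar_i^\pN - \sbar_i^\pN + \ubar_i^\pN$. Summing over $i$ and using that under JSQ every arrival is routed to exactly one queue, so that $\sum_{i=1}^N \abar_i^\pN = \abar^\pN$, gives
\begin{align*}
\sum_{i=1}^N \left(\qbar_i^\pN\right)^+ = \sum_{i=1}^N \qbar_i^\pN + \abar^\pN - \sum_{i=1}^N \sbar_i^\pN + \sum_{i=1}^N \ubar_i^\pN .
\end{align*}

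I would then take expectations on both sides. Since $\left(\vqbar^\pN\right)^+$ is one step of the stationary chain, it has the same distribution as $\vqbar^\pN$, so the two total-queue-length terms have equal (and, crucially, finite) expectation and cancel. Substituting $\E{\abar^\pN} = \lambda^\pN = N\left(1-N^{-\alpha}\right)$ and $\E{\sum_{i=1}^N \sbar_i^\pN} = \sum_{i=1}^N \E{\sbar_i^\pN} = N$ (using $\E{s_i(1)}=1$) leaves
\begin{align*}
0 = \lambda^\pN - N + \E{\sum_{i=1}^N \ubar_i^\pN} ,
\end{align*}
whence $\E{\sum_{i=1}^N \ubar_i^\pN} = N - \lambda^\pN = N^{1-\alpha}$, which is exactly \eqref{eq:epsilon} reinterpreted as a statement about unused service.

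The only real obstacle is justifying the cancellation, i.e., that $\E{\sum_{i=1}^N \qbar_i^\pN}$ is finite, since one cannot subtract two infinite expectations. I would establish this with the quadratic Lyapunov function $V(\vq)=\|\vq\|_2^2$. At a generic state $\vq$ the per-slot increments are bounded, the unused-service contribution is bounded because $u_i(k)>0$ forces $q_i(k)<\smax$, the service cross-term equals $-2\sum_{i=1}^N q_i\E{s_i(k)} = -2\sum_{i=1}^N q_i$, and the arrival cross-term is controlled by the defining JSQ property: since arrivals join a shortest queue, $\langle\vq,\va(k)\rangle = a(k)\min_{j} q_j \le \frac{a(k)}{N}\sum_{i=1}^N q_i$, so in conditional expectation it contributes at most $2\frac{\lambda^\pN}{N}\sum_{i=1}^N q_i = 2\left(1-N^{-\alpha}\right)\sum_{i=1}^N q_i$. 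Combining these yields a conditional drift bound of the form $-2N^{-\alpha}\sum_{i=1}^N q_i + b$ for a finite constant $b$, which is negative outside a finite set; this re-establishes positive recurrence and, via a geometric-tail argument (e.g.\ Hajek's lemma applied to $\|\vq\|_2$) or by setting the steady-state drift of $V$ to zero, gives $\E{\sum_{i=1}^N \qbar_i^\pN}<\infty$. This finiteness is in any case needed for the SSC and transform arguments, so it may alternatively be imported from Section \ref{sec:SSC}.
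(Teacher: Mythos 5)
Your proposal is correct, and its core is exactly the paper's argument: the paper proves the lemma by setting to zero the drift of the linear test function $V_\ell(\vq)=\sum_{i=1}^N q_i$ in steady state, which is precisely your flow-balance computation --- sum \eqref{eq:dynamics.u} over $i$, use $\sum_{i=1}^N \abar_i^\pN=\abar^\pN$, cancel $\E{\sum_{i=1}^N (\qbar_i^\pN)^+}=\E{\sum_{i=1}^N \qbar_i^\pN}$ by stationarity, and invoke \eqref{eq:epsilon}. The one place you genuinely diverge is the justification of the cancellation, i.e.\ finiteness of $\E{\sum_{i=1}^N \qbar_i^\pN}$: the paper imports this from Lemma \ref{lemma:existence-MGF}, whose Foster--Lyapunov argument with an exponential test function yields a finite MGF (hence all moments), whereas you rebuild first-moment finiteness from scratch with the quadratic Lyapunov function $\|\vq\|_2^2$. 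Your drift bound $b-2N^{-\alpha}\sum_{i=1}^N q_i$ is the same estimate the paper derives inside the proof of Proposition \ref{prop:ssc}, via $\langle\vq,\va(k)\rangle=a(k)\min_{j\in[N]}q_j$ and $\min_{j\in[N]}q_j\le \frac{1}{N}\sum_{i=1}^N q_i$, so the route is sound. One caution: literally ``setting the steady-state drift of $V$ to zero'' to extract the first moment is circular, since that step itself presupposes $\E{V(\vqbar^\pN)}<\infty$; the rigorous version is the other option you name --- negative drift of $\|\vq\|_2$ outside a compact set (using $\sum_{i=1}^N q_i\ge\|\vq\|_2$) together with increments bounded by $2\sqrt{N}\max\{\amax,\smax\}$, fed into Hajek's lemma or Lemma \ref{lemma:bertsimas} with $r=1$, which gives $\E{\|\vqbar^\pN\|}<\infty$ and hence $\E{\sum_{i=1}^N\qbar_i^\pN}\le\sqrt{N}\,\E{\|\vqbar^\pN\|}<\infty$. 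Your variant is more elementary and self-contained; the paper's choice costs nothing because Lemma \ref{lemma:existence-MGF} is needed later for the transform proof anyway.
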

\proof{Proof of Lemma \ref{lemma:Eu}.}
	In this proof we omit the dependence on $N$ of the variables for ease of exposition. We set to zero the drift of $V_\ell(\vq)=\sum_{i=1}^N q_i$. Before doing it, we should show that $\E{V_\ell(\vqbar)}<\infty$. This result is a direct consequence of \cite[Proposition 3]{atilla}, so we omit the proof. Setting the drift to zero we obtain
	\begin{align*}
	0&= \E{V_\ell(\vqbar^+)-V_\ell(\vqbar)} \\
	&= \E{\sum_{i=1}^N \left(\qbar_i^+ - \qbar_i\right)} \\
	&\stackrel{(a)}{=} \E{\sum_{i=1}^N \left(\abar_i-\sbar_i+\ubar_i \right)},
	\end{align*}
	where $(a)$ holds by definition of $\vqbar^+$. Rearranging terms we obtain
	\begin{align*}
	\E{\sum_{i=1}^N \ubar_i}& = \E{\sum_{i=1}^N \left(\sbar_i - \abar_i\right)} \\
	&\stackrel{(a)}{=} \sum_{i=1}^N \E{\sbar_i} - \E{\abar} \\
	&\stackrel{(b)}{=} N^{1-\alpha},
	\end{align*}
	where $(a)$ holds because $\abar=\sum_{i=1}^N \abar_i$ by definition; and $(b)$ holds by \eqref{eq:epsilon}.  
\Halmos \endproof

Observe that in the proof of Lemma \ref{lemma:Eu} we do not specifically use that the routing policy is JSQ. In fact, all we need is a routing policy such that $\E{\sum_{i=1}^N \qbar_i^\pN}<\infty$, and this condition is usually satisfied by throughput optimal policies. For example, power-of-$d$ choices satisfies this condition as well.


\section{State Space Collapse}\label{sec:SSC}

The goal of this paper is to show that when $\alpha>4$, the average queue length in the load balancing system in the many-server heavy-traffic limit behaves similarly to the classical heavy-traffic regime. It is known that the load balancing system operating under JSQ exhibits one-dimensional SSC in classical heavy traffic, i.e., in the limit it behaves as a single-server queue. In \cite{atilla}, SSC is proved by showing that the error of approximating the actual vector of queue lengths by its projection on the line where SSC occurs is bounded, and the bound does not depend on the traffic intensity. Therefore, as the traffic intensity increases, this error is negligible. In our case, the traffic intensity depends on the number of servers, so we need to show that the bound becomes negligible as $N$ increases. 
Before stating the result we introduce notation. Given a vector $\vx\in\bR^N$, let 
\begin{align}\label{eq:def-parallel}
\vx_\parallel \defn \left(\sum_{i=1}^N\frac{x_i}{N}\right)\vone\quad \text{and}\quad \vx_\perp \defn \vx-\vx_\parallel.
\end{align}
Then, $\vx_\parallel$ is the projection of $\vx$ on the line generated by the vector $\vone$, and $\vx_\perp$ is the error of approximating $\vx$ by $\vx_\parallel$. In this section we prove the following proposition.

\begin{proposition}\label{prop:ssc}
	Consider a load balancing system operating under JSQ, parametrized by $N$ as described in Section \ref{sec:model}. Let $\delta\in(0,1)$ and $N_0\in\bZ_+$ be such that $\delta\leq 1-N^{-\alpha}$ for all $N\geq N_0$. Then, there exists a finite constant $C$ such that for any $r\in\bZ_+$ with $r\geq 1$, we have
	\begin{align}\label{eq:SSC.moments}
		\E{\left\|\vqbar_\perp^\pN \right\|^r}^{\frac{1}{r}}\leq C r N^3.
	\end{align}
	Additionally, if $\theta\leq \frac{1}{4\amax N^{\frac{5}{2}-\alpha}}\log\left(1 + \frac{\delta}{4\amax N^{\frac{3}{2}}}\right)$ we have
	\begin{align}\label{eq:SSC.mgf}
		\E{\expo{\theta N^{1-\alpha}\left\|\vqbar_\perp^\pN \right\|}}\leq \dfrac{\delta \expo{\frac{2\theta N^{3-\alpha}}{\delta}} }{\delta + 4\amax N^{\frac{3}{2}}\left(1 - \expo{4\theta\amax N^{\frac{5}{2}-\alpha}} \right) }.
	\end{align}
\end{proposition}

In the proof of Proposition \ref{prop:ssc} we use the moment and exponential bounds based on drift arguments proved in \cite[Lemma 3]{MagSri_SSY16_Switch}. We present the proof in Appendix \ref{app:bertsimas_lemma} for completeness.

\section{Proof of Theorem \ref{thm} using transform methods}\label{sec:MGF}

In this section we prove Theorem \ref{thm} using the Transform method introduced in \cite{Hurtado_transform_method} applied to the one-sided Laplace transform. 

\proof{Proof of Theorem \ref{thm} using one-sided Laplace transform.}
	For ease of exposition, in this proof we omit the dependence on $N$ of the variables. 
	The first step is to prove an `exponential version' of \eqref{eq:qu}. We state the result below.
	
	\begin{lemma}\label{lemma:expo_qu}
		Consider a load balancing system operating under JSQ, parametrized by $N$ as described in Section \ref{sec:model} and suppose $\alpha>4$. Let $\thetahat\defn \theta\left(\frac{\sigma_a^2+\sigma_s^2}{2}\right)$, where $\theta<0$ is a finite parameter. Then, there exists $\tilde{\Theta}>0$ such that for all $|\thetahat|<\tilde{\Theta}$ we have
		\begin{align*}
		\left|\E{\left(\expo{\thetahat N^{-\alpha} \sum_{i=1}^N \left(\qbar_i^\pN\right)^+} -1\right)\left(\expo{-\thetahat N^{-\alpha}\sum_{i=1}^N \ubar_i^\pN} -1\right)} \right|\;\text{ is of order $o\left(N^{1-2\alpha}\right)$},
		\end{align*}
		where $\qbar_\Sigma^+$ represents the total queue length one time slot after observing $\qbar_\Sigma$.
	\end{lemma}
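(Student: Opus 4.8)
The plan is to exploit the pointwise complementarity \eqref{eq:qu}, which gives $(\qbar_i)^+\ubar_i=0$ for every $i$, together with the state space collapse bound of Proposition \ref{prop:ssc}. Write $X^+\defn N^{-\alpha}\sum_{i=1}^N(\qbar_i)^+$ and $Y\defn N^{-\alpha}\sum_{i=1}^N\ubar_i$ (dropping the superscript $\pN$ as elsewhere), so that the quantity to be bounded is $\E{(\expo{\thetahat X^+}-1)(\expo{-\thetahat Y}-1)}$. The first observation is that $Y$ is uniformly small: since $\ubar_i\le\smax$ we have $0\le Y\le\smax N^{1-\alpha}\to 0$, so for $N$ large $|\expo{-\thetahat Y}-1|\le 2|\thetahat|Y$. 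Hence it suffices to bound $\E{|\expo{\thetahat X^+}-1|\,Y}$ and to show it is $o(N^{1-2\alpha})$.

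First I would expand $Y=N^{-\alpha}\sum_j\ubar_j$ and use complementarity term by term. On the event $\{\ubar_j>0\}$ we have $(\qbar_j)^+=0$; writing $\vqbar^+_\perp$ for the perpendicular component of $\vqbar^+$ as in \eqref{eq:def-parallel} and $(\vqbar^+_\perp)_j$ for its $j$-th coordinate, and recalling that all coordinates of the parallel part $\vqbar^+_\parallel$ equal $\frac1N\sum_i(\qbar_i)^+$, the condition $(\qbar_j)^+=0$ forces $\sum_i(\qbar_i)^+=-N(\vqbar^+_\perp)_j$ and therefore $X^+=N^{1-\alpha}|(\vqbar^+_\perp)_j|$ on that event. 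This is the crucial localization: when queue $j$ carries unused service, the entire scaled total queue length collapses onto a single coordinate of $\vqbar^+_\perp$, which by exchangeability carries a factor $N$ less mass than the full norm. Using $|\expo{\thetahat X^+}-1|\le|\thetahat|X^+\expo{|\thetahat|X^+}$ and substituting this identity inside each term $\E{|\expo{\thetahat X^+}-1|\,\ubar_j}$ (legitimate, since $\ubar_j$ vanishes off $\{\ubar_j>0\}$) turns the estimate into a sum over $j$ of expectations of $N^{1-\alpha}|(\vqbar^+_\perp)_j|\,\expo{|\thetahat|N^{1-\alpha}|(\vqbar^+_\perp)_j|}\,\ubar_j$.

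The key step is then Hölder's inequality with a large exponent $p$ on the queue-length factor and the conjugate exponent $q=p/(p-1)$ close to $1$ on $\ubar_j$; taking $q$ near $1$ is what extracts the smallness of the unused service. Indeed $\ubar_j\le\smax$ and, by Lemma \ref{lemma:Eu} with exchangeability, $\E{\ubar_j}=N^{-\alpha}$, so $\|\ubar_j\|_q\le\smax^{1/p}N^{-\alpha/q}$. For the queue-length factor I would use $\E{|(\vqbar^+_\perp)_j|^p}\le K(p)N^{p-1}$, which follows from Proposition \ref{prop:ssc} via $\|\cdot\|_p\le\|\cdot\|_2$ (for $p\ge2$), exchangeability, and $\vqbar^+\stackrel{d}{=}\vqbar$; the exponential factor I would control through a moment generating function bound $\E{\expo{tN^{1-\alpha}\|\vqbar_\perp\|}}\le \text{const}$, separating the polynomial and exponential parts by a further Cauchy--Schwarz. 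The point is that, by Stirling, $K(r)/r!$ grows only geometrically in $r$, so $\|\vqbar_\perp\|/N$ has a finite exponential moment, and this MGF is finite once the effective parameter $tN^{2-\alpha}$ is small, which holds for $\alpha>2$ and $N$ large. Collecting the exponents, each summand is $O\!\left(N^{2-\alpha-\frac{1}{2p}-\alpha/q}\right)$; summing the $N$ terms and inserting the prefactor $N^{-\alpha}$ from $Y$ yields a bound of order $N^{3-2\alpha-\frac{1}{2p}-\alpha/q}$.

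Finally I would verify the exponent. We need $3-2\alpha-\frac{1}{2p}-\frac{\alpha}{q}<1-2\alpha$, i.e. $\alpha>q\left(2-\frac{1}{2p}\right)$; with $q=p/(p-1)$ the right-hand side equals $\frac{4p-1}{2(p-1)}$, which decreases to $2$ as $p\to\infty$. Hence for every $\alpha>2$ one can fix a finite $p$ making the bound $o(N^{1-2\alpha})$, and the restriction $|\thetahat|<\tilde\Theta$ is exactly what keeps the exponential moment finite. I expect the main obstacle to be precisely this exponent bookkeeping: a naive Cauchy--Schwarz estimate of the cross term $\E{X^+Y}$ only reaches $\alpha>3$, and attaining the sharp threshold $\alpha>2$ relies on (i) the exact projection identity that localizes $X^+$ onto the single coordinate $(\vqbar^+_\perp)_j$, whose moments are a factor $N$ smaller than those of $\|\vqbar^+_\perp\|$, and (ii) the Hölder optimization trading the near-$L^1$ smallness of $\ubar_j$ against high SSC moments. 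The one genuinely delicate estimate is the uniform boundedness of the exponential factor $\expo{|\thetahat|X^+}$ on the unused-service events, equivalently the MGF bound at scale $N$, and this is exactly where the hypothesis $\alpha>2$ enters.
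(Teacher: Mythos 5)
Your proposal is correct and follows the same skeleton as the paper's proof: your pointwise localization ($X^+=N^{1-\alpha}\left|(\vqbar_\perp^+)_j\right|$ on $\{\ubar_j>0\}$, via complementarity \eqref{eq:qu}) is exactly the identity the paper imports as Lemma \ref{lemma:qperp-u} from \cite{Hurtado_transform_method_temp}, and your H\"older step trading the near-$L^1$ smallness of the unused service against high SSC moments is the paper's step $(d)$ in dual notation --- your $q=p/(p-1)\to 1$ on $\ubar_j$ corresponds to the paper's exponent $p\to 1^+$ with condition $\alpha>2p$, and both parameterizations reach precisely the threshold $\alpha>2$. The genuine differences are in the packaging. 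First, you linearize the unused-service factor up front via $0\le Y\le \smax N^{1-\alpha}$, where the paper manipulates the ratio $(e^x-1)/x$. Second, you control the exponential factor through the MGF of $\|\vqbar_\perp\|/N$, observing via Stirling that $K(r)$ grows like $C^r r!$ so the effective parameter $|\thetahat|N^{2-\alpha}\to 0$; this is cleaner and more self-contained than the paper's claim \eqref{claim:expo_qperp_constant}, which routes through Lemma \ref{lemma:existence-MGF} and nonexpansiveness of projection. Third, you work per coordinate, invoking exchangeability to get $\E{\ubar_j}=N^{-\alpha}$ and $\E{\left|(\vqbar_\perp)_j\right|^{2p}}\le K(2p)N^{2p-1}$, a factor-$N$ saving the paper never uses (it bounds $|\qbar_{\perp i}|\le\|\vqbar_\perp\|$ and keeps the sums). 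This last point is the one place needing care: exchangeability of the stationary distribution requires the law of $\vs(1)$ to be symmetric across coordinates, which Section \ref{sec:model} states only implicitly (identical means and variances, not identical distributions). You should either flag that assumption or note that the summed form of H\"older, $\E{\sum_j \ubar_j g_j}\le \E{\sum_j \ubar_j^q}^{1/q}\E{\sum_j g_j^p}^{1/p}$ as in the paper, avoids it entirely; redoing your exponent count in that form gives the condition $\alpha>1+q\left(1+\tfrac{1}{2p}\right)$, which still tends to $2$, so nothing is lost.
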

	
	We present the proof of Lemma \ref{lemma:expo_qu} in Appendix \ref{app:lemma-expo-qu}. Rearranging terms in the expression of Lemma \ref{lemma:expo_qu} we obtain
		\begin{align*}
	& \E{\expo{\thetahat N^{-\alpha}\sum_{i=1}^N \qbar_i^+}} - \E{\expo{\thetahat N^{-\alpha}\sum_{i=1}^N \qbar_i}}\E{\expo{\thetahat N^{-\alpha}\left(\abar-\sum_{i=1}^N\sbar_i\right)}} \\
	&= 1-\E{\expo{-\thetahat N^{-\alpha}\sum_{i=1}^N \ubar_i}} + o\left(N^{1-2\alpha}\right),
	\end{align*}
	where we used the dynamics of the queues \eqref{eq:dynamics.u}, the fact that $\abar=\sum_{i=1}^N \abar_i$, and that the arrival process to the system and the potential service processes are independent of the queue lengths.
	
	Since $\thetahat<0$, we know $\E{\expo{\thetahat N^{-\alpha}\sum_{i=1}^N \qbar_i}}\leq 1<\infty$. Then, we can set its drift to zero, i.e., we set $\E{\expo{\thetahat N^{-\alpha}\sum_{i=1}^N \qbar_i}} =\E{\expo{\thetahat N^{-\alpha}\sum_{i=1}^N \qbar_i^+}}$. Using this property and rearranging terms we obtain
	\begin{align}\label{eq:thm_fraction}
	\E{\expo{\thetahat N^{-\alpha}\sum_{i=1}^N \qbar_i}} = \dfrac{ 1-\E{\expo{-\thetahat N^{-\alpha}\sum_{i=1}^N \ubar_i}} + o\left(N^{1-2\alpha}\right)}{1- \E{\expo{\thetahat N^{-\alpha}\left(\abar-\sum_{i=1}^N \sbar_i\right)}} }
	\end{align}

	Our goal is to take the limit as $N\to\infty$. Observe that the right-hand side of \eqref{eq:thm_fraction} yields a $\frac{0}{0}$ form in the limit. Then, we take the Taylor expansion with respect to $\thetahat$ of the numerator and denominator. For the numerator we obtain
	\begin{align*}
	1-\E{\expo{-\thetahat N^{-\alpha}\sum_{i=1}^N \ubar_i}} & = \thetahat N^{-\alpha} \E{\sum_{i=1}^N \ubar_i} + o\left(N^{1-2\alpha}\right) = \thetahat N^{1-2\alpha} + o\left(N^{1-2\alpha}\right),
	\end{align*}
	where the last equality holds by Lemma \ref{lemma:Eu}. The $o\left(N^{1-2\alpha}\right)$ term arises because for all $r\geq 2$ we have
	\begin{align*}
	\left|\dfrac{\thetahat^r N^{-r\alpha}}{r!}\E{\left(\sum_{i=1}^N \ubar_i \right)^r} \right| & \stackrel{(a)}{=} \dfrac{|\thetahat|^r N^{-r\alpha}}{r!}\E{\left(\sum_{i=1}^N \ubar_i \right)^{r-1} \left(\sum_{i=1}^N \ubar_i\right)} \\
	& \stackrel{(b)}{\leq} \frac{|\thetahat|^r \smax^{r-1}}{r!}N^{r(1-\alpha)-1}\E{\sum_{i=1}^N \ubar_i} \\
	& \stackrel{(c)}{=} \frac{|\thetahat|^r \smax^{r-1}}{r!}N^{r-\alpha(r+1)},
	\end{align*}
	where $(a)$ holds because $\ubar_i\geq 0$ with probability 1 for all $i\in[N]$ by definition of unused service; $(b)$ holds because $0\leq\ubar_i\leq\sbar_i\leq\smax$; and $(c)$ holds by Lemma \ref{lemma:Eu}. Also, $r-\alpha(r+1)-(1-2\alpha) = (r-1)(1-\alpha)$, which is negative for all $\alpha>1$. Then, it is negative for $\alpha>4$.
	
	For the denominator we obtain
	\begin{align*}
	& 1- \E{\expo{\thetahat N^{-\alpha}\left(\abar-\sum_{i=1}^N \sbar_i\right)}} \\
	& = -\thetahat N^{-\alpha}\E{\abar-\sum_{i=1}^N \sbar_i} - \frac{\thetahat^2 N^{-2\alpha}}{2}\E{\left(\abar-\sum_{i=1}^N \sbar_i\right)^2} + o\left(N^{1-2\alpha}\right) \\
	& \stackrel{(a)}{=} \thetahat N^{1-2\alpha} - \dfrac{\thetahat^2 N^{1-2\alpha}}{2}\left(\sigma_a^2+\sigma_s^2\right) - \dfrac{\thetahat^2 N^{2-4\alpha}}{2}+o\left(N^{1-2\alpha}\right) ,
	\end{align*}
	where $(a)$ holds by definition of variance and because $\E{\sum_{i=1}^N \sbar_i-\abar}=N^{1-\alpha}$. The $o\left (N^{1-2\alpha} \right)$ arises similarly to the case of the numerator. We omit the details for brevity.
	
	Putting everything together and canceling $\thetahat N^{1-2\alpha}$ from the numerator and the denominator we obtain
	\begin{align*}
	\E{\expo{\thetahat N^{-\alpha}\sum_{i=1}^N \qbar_i}} & = \dfrac{1+o(1)}{1-\thetahat\left(\dfrac{\sigma_a^2+\sigma_s^2}{2}\right)+o(1)} \\
	& = \dfrac{1+o(1)}{1-\theta+o(1)},
	\end{align*}
	where the last equality holds because $\thetahat=\frac{2\theta}{\sigma_a^2+\sigma_s^2} $. Therefore,
	\begin{align*}
	\lim_{N\to\infty} \E{\expo{\thetahat N^{-\alpha}\sum_{i=1}^N \qbar_i}} &= \dfrac{1}{1-\theta},
	\end{align*}
	which is the one-sided Laplace transform of an exponential random variable with mean 1.   
\Halmos \endproof


\section{Proof of Theorem \ref{thm} using Stein's method}\label{sec:Stein}

The proof we provide in this section is based on bounding the Wasserstein's distance between the average queue length and an exponential random variable. We start with the definition of this metric as presented in \cite{Ross2011_SteinSurvey}.

\begin{definition}
	For two probability measures $\nu_1$ and $\nu_2$, the Wasserstein's distance between them is
	\begin{align*}
	d_W(\nu_1,\nu_2)\defn \sup_{h\in\emph{Lip(1)}} \left|\int h(x)\,d\nu_1(x) - \int h(x)\,d\nu_2(x) \right|,
	\end{align*}
	where $\emph{Lip(1)}=\left\{h:\bR\to\bR\text{ such that }|h(x)-h(y)|\leq |x-y| \right\}$ is the set of Lipschitz functions with constant 1.
\end{definition}

For random variables $X$ and $Y$ with laws $\nu_1$ and $\nu_2$, respectively, we write $d_W(X,Y)$ instead of $d_W(\nu_1,\nu_2)$, and when the measures are clear from the context we write
\begin{align}\label{eq:Wass-Eh}
d_W(X,Y) = \sup_{h\in\text{Lip(1)}} \left|\E{h(X)}-\E{h(Y)} \right|.
\end{align}

It is well-known that, if $\{X^{(n)}:n\in\bZ_+\}$ is a sequence of random variables, and $X$ is another random variable, then $d_W(X^{(n)},X)$ converging to zero as $n\to\infty$ implies that $X^{(n)}\Rightarrow X$ \cite{Ross2011_SteinSurvey}. We use this result to prove Theorem \ref{thm} as a consequence of the following theorem.

\begin{theorem}\label{thm:Stein}
	Consider a load balancing system operating under JSQ, parametrized by $N$ as described in Section \ref{sec:model}. Let $Z$ be an exponential random variable with mean 1. Then, we have
	\begin{align*}
	&
	\begin{aligned}
		& d_W\left(\dfrac{2N^{-\alpha}}{\sigma_a^2+\sigma_s^2}\sum_{i=1}^N \qbar^\pN_i,Z \right) \\
			& \leq \dfrac{1}{\sigma_a^2+\sigma_s^2}\left(5\smax N^{1-\alpha} + N^{1-2\alpha} + C \smax(\alpha-1) N^{4-\alpha} \left\lceil\log\left(N\right)\right\rceil N^{\frac{1}{\left\lceil\log\left(N\right)\right\rceil}} \right. \\
			&\qquad \qquad \qquad \left. +  \dfrac{4N^{2-3\alpha}}{3(\sigma_a^2+\sigma_s^2)} \left(\amax + 2\smax\right)^3 \right).
	\end{aligned} 
	\end{align*}
	where $\overline{C}\defn C\smax e^{\frac{1}{2e}+1}$ and $C$ is the constant from Proposition \ref{prop:ssc}.
\end{theorem}
Theorem \ref{thm} holds as an immediate consequence of Theorem \ref{thm:Stein} because when $\alpha>4$ (which is one of the assumptions of Theorem \ref{thm}), the upper bound converges to zero as $N\to\infty$.

Now we prove Theorem \ref{thm:Stein}. Our proof is based on the following lemma \cite[Theorem 5.4 part 1.]{Ross2011_SteinSurvey}.
\begin{lemma}\label{lemma:Wass-dist-Ross}
	Let $W\geq 0$ be a random variable with $\E{W}<\infty$, and let $Z$ be an exponential random variable with mean 1. Define
	\begin{align}\label{eq:Fk}
	\cF_W\defn \left\{g:\bR\to\bR \text{ such that } g(0)=0,\, \left\| g'\right\|\leq 1,\, \|g''\|\leq 2 \right\}.
	\end{align}
	Then,
	\begin{align*}
	d_W(W,Z)\leq \sup_{g\in \cF_W}\left|\E{g'(W)-g(W)} \right|.
	\end{align*}
\end{lemma}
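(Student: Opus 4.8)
The plan is to recognize Lemma \ref{lemma:Wass-dist-Ross} as the standard Stein's-method bound for the exponential law and to reconstruct its proof through the associated Stein equation. The starting point is the Stein characterization of $Z$: integrating by parts against the density $e^{-x}\ind{x\geq 0}$ gives $\E{f'(Z)-f(Z)}=-f(0)$ for every absolutely continuous $f$ of at most linear growth, so that $\E{f'(Z)-f(Z)}=0$ exactly when $f(0)=0$. This identifies the operator $f\mapsto f'-f$, acting on functions vanishing at the origin, as the Stein operator for the mean-$1$ exponential distribution, and it motivates, for each test function $h\in\text{Lip(1)}$, the Stein equation
\begin{align*}
g'(x)-g(x)=h(x)-\E{h(Z)}.
\end{align*}

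First I would solve this first-order linear ODE explicitly. Writing $\tilde{h}\defn h-\E{h(Z)}$ and using the identity $\int_0^\infty e^{-t}\tilde{h}(t)\,dt=0$ (which holds because $\int_0^\infty e^{-t}h(t)\,dt=\E{h(Z)}$ and $\int_0^\infty e^{-t}\,dt=1$), the solution satisfying $g(0)=0$ can be written as
\begin{align*}
g_h(x)=-\int_0^\infty e^{-u}\,\tilde{h}(x+u)\,du.
\end{align*}
The integral converges since $h$, being Lipschitz, grows at most linearly, and $g_h(0)=0$ follows directly from the cancellation identity. I would verify that this is the unique solution that does not blow up like $e^x$, so it is the natural candidate lying in $\cF_W$.

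Next I would establish the two a priori bounds that place $g_h$ in $\cF_W$. For the first derivative, the Lipschitz property $|\tilde{h}(x+\epsilon+u)-\tilde{h}(x+u)|\leq|\epsilon|$ applied inside the integral shows that $g_h$ itself is Lipschitz with constant $\int_0^\infty e^{-u}\,du=1$, so $\|g_h'\|\leq 1$. For the second derivative I would differentiate the Stein equation to obtain $g_h'=g_h+\tilde{h}$; since $g_h$ and $\tilde h$ are each Lipschitz with constant $1$, their sum $g_h'$ is Lipschitz with constant $2$, giving $\|g_h''\|\leq 2$ (interpreted as an essential supremum, with the derivatives existing almost everywhere for Lipschitz $h$). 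Hence $g_h\in\cF_W$ for every $h\in\text{Lip(1)}$. To conclude, I would evaluate the Stein equation at $W$ and take expectations, yielding $\E{h(W)}-\E{h(Z)}=\E{g_h'(W)-g_h(W)}$; this expectation is finite because $\|g_h'\|\leq 1$ and $|g_h(W)|\leq|W|$ together with $\E{W}<\infty$ (the reason this hypothesis is imposed). Bounding the right-hand side by $\sup_{g\in\cF_W}|\E{g'(W)-g(W)}|$ and then taking the supremum over $h\in\text{Lip(1)}$, the left-hand side becomes $d_W(W,Z)$ by \eqref{eq:Wass-Eh}, which is the claim.

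I expect the main obstacle to be the rigorous justification of the two Stein-solution bounds when $h$ is merely Lipschitz rather than smooth: differentiation under the integral sign and the meaning of $g_h''$ must be handled through the almost-everywhere differentiability of Lipschitz functions, or by first proving everything for smooth $h$ and then passing to the general case by a standard mollification argument. The algebraic identity $g_h'=g_h+\tilde h$ makes the $\|g_h''\|\leq 2$ bound essentially automatic once $\|g_h'\|\leq 1$ is in place, so the delicate point is really the careful treatment of regularity rather than any hard estimate.
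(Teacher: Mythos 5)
Your proposal is correct, and it is essentially the proof behind the paper's own treatment of this statement: the paper does not prove Lemma \ref{lemma:Wass-dist-Ross} itself but cites it from Ross's survey (Theorem 5.4, part 2), whose argument is exactly your reconstruction --- solve the Stein equation $g'(x)-g(x)=h(x)-\E{h(Z)}$ with $g(0)=0$, show $\|g_h'\|\leq 1$ from the integral representation and $\|g_h''\|\leq 2$ from the identity $g_h'=g_h+\tilde h$, then evaluate at $W$ and take suprema. Your handling of the side conditions (the cancellation identity giving $g_h(0)=0$, finiteness of $\E{g_h(W)}$ via $|g_h(W)|\leq W$ and $\E{W}<\infty$, and the almost-everywhere interpretation of $g_h''$ for merely Lipschitz $h$) is also consistent with the cited source.
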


Now we prove Theorem \ref{thm:Stein} using Stein's method.

\proof{Proof of Theorem \ref{thm:Stein}.}
	For ease of exposition, we omit the dependence on $N$ of the variables. We use Lemma \ref{lemma:Wass-dist-Ross} with 
	\begin{align*}
	W=q_\Sigma^\pN \defn  \dfrac{2N^{-\alpha}}{\sigma_a^2+\sigma_s^2}\sum_{i=1}^N \qbar_i.
	\end{align*}
	
	Let $g=f'\in\cF_W$. Observe that $f'\in\cF_W$ implies $f\in\text{Lip(1)}$ and, hence, $f$ is integrable. Therefore, if $f'\in\cF_W$, then $f$ is differentiable and well defined \cite[Theorem 7.2]{Wheeden2015_RealAnalysis}.
	
	Let $\qbar_\Sigma^+ \defn \left(\frac{2 N^{-\alpha}}{\sigma_a^2+\sigma_s^2}\right)\sum_{i=1}^N \qbar_i^+$. We start expanding $f\left(\qbar_\Sigma^+\right)$ in Taylor series around $\qbar_\Sigma$. There exists $\xi$ between $\qbar_\Sigma^+$ and $\qbar_\Sigma$ such that
	\begin{align}
	f\left(\qbar_\Sigma^+\right) & = f\left(\qbar_\Sigma\right) + \left(\qbar_\Sigma^+ - \qbar_\Sigma\right)f'\left(\qbar_\Sigma\right) + \frac{\left(\qbar_\Sigma^+ - \qbar_\Sigma\right)^2}{2}f''\left(\qbar_\Sigma\right) + \frac{\left(\qbar_\Sigma^+ - \qbar_\Sigma\right)^3}{6}f''\left(\xi\right) \nonumber\\
	&\begin{aligned}\label{eq:stein-Taylor-f}
	 & = f\left(\qbar_\Sigma\right) + \dfrac{2N^{-\alpha}}{\sigma_a^2+\sigma_s^2}\left(\abar-\sum_{i=1}^n \sbar_i + \sum_{i=1}^N \ubar_i\right)f'\left(\qbar_\Sigma \right)  \\
	 &\quad + \dfrac{2N^{-2\alpha}}{\left(\sigma_a^2+\sigma_s^2\right)^2} \left(\abar-\sum_{i=1}^N \sbar_i+\sum_{i=1}^N \ubar_i \right)^2 f''\left(\qbar_\Sigma\right) \\
	 &\quad + \dfrac{4N^{-3\alpha}}{3\left(\sigma_a^2+\sigma_s^2\right)^3} \left(\abar-\sum_{i=1}^N \sbar_i+\sum_{i=1}^N \ubar_i \right)^3 f''\left(\xi \right) ,
	\end{aligned}
	\end{align}
	where we used that, by definition of $\vqbar^+$ and because $\abar=\sum_{i=1}^N \abar_i$, 
	\begin{align*}
	\qbar_\Sigma^+ -\qbar_\Sigma = \dfrac{2N^{-\alpha}}{\sigma_a^2+\sigma_s^2}\left(\abar - \sum_{i=1}^N \sbar_i + \sum_{i=1}^N \ubar_i\right).
	\end{align*}
	We take expectation of \eqref{eq:stein-Taylor-f} with respect to the stationary distribution. Then, we reorganize terms using the following properties: (i) $f$ is integrable, so set $\E{f\left(\qbar_\Sigma^+\right) }= \E{f\left(\qbar_\Sigma\right)}$; (ii) $\abar$ and $\vsbar$ are independent of $\vqbar$, so $\E{\left(\abar-\sum_{i=1}^N \sbar_i\right)f(\qbar_\Sigma)} = \E{\abar-\sum_{i=1}^N \sbar_i} \E{f(\qbar_\Sigma)}$; (iii) $\E{\sum_{i=1}^N \sbar_i-\abar}=N^{1-\alpha}$; and (iv) the definition of variance. We obtain
	\begin{align}
	\begin{aligned}\label{eq:stein-E-f}
	\E{f'\left(\qbar_\Sigma\right)} &= N^{\alpha-1} \E{\left(\sum_{i=1}^N \ubar_i\right) f'\left(\qbar_\Sigma\right)} + \left(1 + \dfrac{N^{1-2\alpha}}{\sigma_a^2+\sigma_s^2}\right) \E{f''(\qbar_\Sigma)} \\
	&\quad + \dfrac{1}{N\left(\sigma_a^2+\sigma_s^2\right)}\E{\left(\sum_{i=1}^N \ubar_i\right)^2 f''(\qbar_\Sigma)} \\
	&\quad + \dfrac{2}{N\left(\sigma_a^2+\sigma_s^2\right)}\E{\left(\abar-\sum_{i=1}^N \sbar_i\right)\left(\sum_{i=1}^N\ubar_i\right)f''(\qbar_\Sigma)} \\
	&\quad + \dfrac{2}{3 N^{1+3\alpha}(\sigma_a^2+\sigma_s^2)^2} \E{\left(\abar - \sum_{i=1}^N \sbar_i + \sum_{i=1}^N \ubar_i\right)^3 f'''(\xi)}
	\end{aligned}
	\end{align}
	
	Then, using \eqref{eq:stein-E-f} and the triangle inequality, we obtain
	\begin{align}
	\begin{aligned}\label{eq:stein-to-bound}
	& \left|\E{f'\left(\qbar_\Sigma\right)-f''\left(\qbar_\Sigma \right)} \right| \\
	& \leq N^{\alpha-1}\left|\E{\left(\sum_{i=1}^N \ubar_i\right)f'\left(\qbar_\Sigma\right)} \right| + \dfrac{N^{1-2\alpha}}{\sigma_a^2+\sigma_s^2}\left|\E{f''(\xi)} \right| \\
	&\quad + \dfrac{1}{N\left(\sigma_a^2+\sigma_s^2\right)} \left|\E{\left(\sum_{i=1}^N \ubar_i\right)^2 f''\left(\xi\right)} \right| \\
	&\quad + \dfrac{2}{N\left(\sigma_a^2 + \sigma_s^2 \right)}\left|\E{\left(\abar-\sum_{i=1}^N\sbar_i\right)\left(\sum_{i=1}^N \ubar_i\right) f''(\xi)} \right| \\
	&\quad + \dfrac{2}{3 N^{1+3\alpha}(\sigma_a^2+\sigma_s^2)^2} \left|\E{\left(\abar - \sum_{i=1}^N \sbar_i + \sum_{i=1}^N \ubar_i\right)^3 f'''(\xi)}\right|.
	\end{aligned}
	\end{align}
	
	We bound each of the terms of \eqref{eq:stein-to-bound}. For the first term, we first expand $f'(\qbar_\Sigma)$ in Taylor series around 0. There exists $\zeta$ between 0 and $\qbar_\Sigma$ such that
	\begin{align*}
	f'\left(\qbar_\Sigma\right) & \stackrel{(a)}{=} \qbar_\Sigma f''(\zeta) \stackrel{(b)}{\leq} \qbar_\Sigma
	\end{align*}
	where $(a)$ holds because $f'(0)=0$ since $f'\in\cF_W$; and $(b)$ holds because $f''(\zeta)\leq 1$ and $\qbar_\Sigma$ is nonnegative. Then, for the first term of \eqref{eq:stein-to-bound} we have
	\begin{align*}
	N^{\alpha-1}\left|\E{\left(\sum_{i=1}^N \ubar_i\right)f'\left(\qbar_\Sigma\right)} \right| & \leq N^{\alpha-1}\E{\left(\sum_{i=1}^N \ubar_i\right)\left|f'\left(\qbar_\Sigma\right) \right|} \\
	& \leq N^{\alpha-1}\E{\left(\sum_{i=1}^N \ubar_i\right)\qbar_\Sigma} \\
	& \stackrel{(a)}{=} \dfrac{2}{N\left(\sigma_a^2+\sigma_s^2\right)}\E{\left(\sum_{i=1}^N \ubar_i\right)\left(\sum_{i=1}^N\qbar_i \right)} \\
	& \stackrel{(b)}{=} \dfrac{2}{\sigma_a^2+\sigma_s^2}\E{\langle \vqbar_\parallel,\vubar_\parallel\rangle} \\
	& \stackrel{(c)}{\leq} \dfrac{2}{\sigma_a^2+\sigma_s^2}\left(\smax N^{1-\alpha} + C \smax(\alpha-1) N^{4-\alpha} \left\lceil\log\left(N\right)\right\rceil N^{\frac{1}{\left\lceil\log\left(N\right)\right\rceil}} \right),
	\end{align*}
	where $(a)$ holds by definition of $\qbar_\Sigma$; $(b)$ holds by definition of $\vqbar_\parallel$ and $\vubar_\parallel$ according to \eqref{eq:def-parallel}; and $(c)$ holds by the claim below.
	
	\begin{claim}
		Consider a load balancing system as described in Theorem \ref{thm:Stein}. Then, 
		\begin{align}\label{claim:stein-qperp}
		\E{\langle\vqbar_\parallel,\vubar_\parallel\rangle} & \leq \smax N^{1-\alpha} + C \smax \left\lceil\alpha-1\right\rceil N^{4-\alpha} \left\lceil\log\left(N\right)\right\rceil N^{\frac{1}{\left\lceil\log\left(N\right)\right\rceil}}.
		\end{align}
	\end{claim}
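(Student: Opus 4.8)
The plan is to split the inner product using orthogonality and to match the two resulting pieces to the two terms of the claimed bound. Since $\vqbar_\parallel$ is parallel to $\vone$ while $\vubar_\perp\perp\vone$ (and symmetrically), the cross terms vanish and
\[
\langle\vqbar_\parallel,\vubar_\parallel\rangle=\langle\vqbar,\vubar\rangle-\langle\vqbar_\perp,\vubar_\perp\rangle .
\]
Taking expectations and using $-\E{\langle\vqbar_\perp,\vubar_\perp\rangle}\le\E{|\langle\vqbar_\perp,\vubar_\perp\rangle|}$, it suffices to prove the two bounds $\E{\langle\vqbar,\vubar\rangle}\le\smax N^{1-\alpha}$ and $\E{|\langle\vqbar_\perp,\vubar_\perp\rangle|}\le\overline{C}\lceil\alpha-1\rceil N^{2-\alpha}\lceil\log N\rceil$.

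First I would handle the parallel part. Writing $\langle\vqbar,\vubar\rangle=\sum_{i=1}^N\qbar_i\ubar_i$, note that $\ubar_i>0$ means server $i$ had spare capacity, i.e. $\sbar_i>\qbar_i+\abar_i\ge\qbar_i$, so $\qbar_i<\sbar_i\le\smax$. Hence $\sum_{i=1}^N\qbar_i\ubar_i\le\smax\sum_{i=1}^N\ubar_i$, and Lemma \ref{lemma:Eu} gives $\E{\langle\vqbar,\vubar\rangle}\le\smax N^{1-\alpha}$, the first term.

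For the perpendicular part, using $\vqbar_\perp\perp\vubar_\parallel$ I would rewrite $\langle\vqbar_\perp,\vubar_\perp\rangle=\langle\vqbar_\perp,\vubar\rangle=\sum_{i=1}^N(\vqbar_\perp)_i\ubar_i$. Letting $\mathcal{U}\defn\{i:\ubar_i>0\}$ and $N_u\defn|\mathcal{U}|$, bounding $\ubar_i\le\smax$ on the support and applying Cauchy--Schwarz on $\mathcal{U}$ gives $|\langle\vqbar_\perp,\vubar\rangle|\le\smax\sqrt{N_u}\,\|\vqbar_\perp\|$. The crucial observation is that the unused service is integer valued, so every nonzero $\ubar_i\ge1$, whence $N_u\le\sum_{i=1}^N\ubar_i=:U$ and $\sqrt{N_u}\le\sqrt{U}$. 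Applying H\"older's inequality with conjugate exponents $p,q$ (so $q=p/(p-1)$) yields
\[
\E{|\langle\vqbar_\perp,\vubar_\perp\rangle|}\le\smax\,\E{U^{q/2}}^{1/q}\,\E{\|\vqbar_\perp\|^p}^{1/p}.
\]
Here integrality enters a second time: since $U\in\bZ_+$ and $q\le2$ one has $U^{q/2}\le U$ pointwise, so $\E{U^{q/2}}\le\E{U}=N^{1-\alpha}$ by Lemma \ref{lemma:Eu}; combined with $\E{\|\vqbar_\perp\|^p}\le K(p)N^p$ from Proposition \ref{prop:ssc} this produces $\smax\,K(p)^{1/p}N^{1+(1-\alpha)/q}=\smax\,K(p)^{1/p}N^{2-\alpha}N^{(\alpha-1)/p}$.

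Finally I would optimize over the integer exponent $p$. Since $K(p)=C^pp^{p+1/2}e^{1-p}$ and $\sup_{p\ge1}p^{1/(2p)}=e^{1/(2e)}$, one gets $K(p)^{1/p}\le Ce^{1/(2e)}p$, which grows only linearly in $p$, while the stray factor $N^{(\alpha-1)/p}$ shrinks as $p$ grows. Choosing $p=\lceil\alpha-1\rceil\lceil\log N\rceil$ (an integer $\ge2$ for large $N$, which also forces the harmless restriction $\alpha>1$ so that $q\le2$) makes $N^{(\alpha-1)/p}\le e$ and turns the prefactor into $C\smax e^{1/(2e)+1}\lceil\alpha-1\rceil\lceil\log N\rceil=\overline{C}\lceil\alpha-1\rceil\lceil\log N\rceil$, which is exactly the claimed bound. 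The main obstacle is hitting the exponent $2-\alpha$: a plain Cauchy--Schwarz, or any H\"older that estimates $\E{U^{q/2}}$ through Jensen by $(\E{U})^{q/2}$, only reaches $N^{(3-\alpha)/2}$, which fails to vanish for $2<\alpha\le3$. It is precisely the integrality of the unused service — used both to control the support size $N_u\le U$ and to replace $(\E{U})^{q/2}$ by $\E{U}$ — together with the $p\sim\log N$ choice that balances $K(p)^{1/p}\sim p$ against $N^{(\alpha-1)/p}$, that closes this gap and generates the $\lceil\alpha-1\rceil$ and $\lceil\log N\rceil$ factors.
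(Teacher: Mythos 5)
Your proof is correct, and it reproduces the paper's skeleton exactly — the same orthogonal decomposition (your $\langle\vqbar_\perp,\vubar_\perp\rangle=\langle\vqbar_\perp,\vubar\rangle$ is the paper's term), the same bound $\E{\langle\vqbar,\vubar\rangle}\leq\smax\E{\sum_i\ubar_i}=\smax N^{1-\alpha}$ via $\ubar_i>0\Rightarrow\qbar_i\leq\smax$ and Lemma \ref{lemma:Eu}, the same use of Proposition \ref{prop:ssc} with a large H\"older exponent, and the same optimization $p\sim\log(N^{\alpha-1})$ balancing $K(p)^{1/p}\lesssim p$ against $N^{(\alpha-1)/p}\leq e$. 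Where you genuinely diverge is the mechanism controlling the unused-service factor: you invoke integrality of $\ubar_i$ twice (nonzero entries are $\geq1$, so the support size satisfies $N_u\leq U$, and $U^{q/2}\leq U$ for $U\in\bZ_+$, $q\leq2$), whereas the paper never uses integrality. It instead applies H\"older with vector norms, $|\E{\langle\vqbar_\perp,\vubar\rangle}|\leq\E{\|\vqbar_\perp\|_r^r}^{1/r}\E{\|\vubar\|_{r^*}^{r^*}}^{1/r^*}$, and linearizes via boundedness alone: $\ubar_i^{r^*}\leq\smax^{r^*-1}\ubar_i$ pointwise, so $\E{\|\vubar\|_{r^*}^{r^*}}\leq\smax^{r^*-1}N^{1-\alpha}$, which yields the same exponent $N^{2-\alpha+(\alpha-1)/r}$. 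Your integrality assumption is legitimate here (queue lengths and services are job counts, so $u_i(k)=\max\{s_i(k)-q_i(k)-a_i(k),0\}$ is a nonnegative integer), and your route is slightly cleaner in one respect — your Cauchy--Schwarz produces the Euclidean norm directly, so you never need the comparison $\|\cdot\|_r\leq\|\cdot\|_2$ — but your closing assertion that integrality is \emph{precisely} what closes the gap past $N^{(3-\alpha)/2}$ is overstated: the paper's linearization $\ubar_i^{r^*}\leq\smax^{r^*-1}\ubar_i$ achieves the same effect using only $0\leq\ubar_i\leq\smax$, so the argument survives in settings (e.g., rescaled or non-lattice service) where integrality fails but boundedness holds. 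One minor caveat you correctly flag, shared by the paper's own choice $r=\lceil\log(N^{\alpha-1})\rceil$: the exponent must be an integer $\geq2$ for Proposition \ref{prop:ssc} (and for your $q\leq2$), which holds once $N$ is large; for the finitely many small $N$ the bound can be absorbed into the constant.
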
 

	We prove the claim in Appendix \ref{app:claim-stein}. For the second term of \eqref{eq:stein-to-bound} we have
	\begin{align*}
	\dfrac{N^{1-2\alpha}}{\sigma_a^2+\sigma_s^2}\left|\E{f''(\qbar_\Sigma)} \right| & \leq \dfrac{N^{1-2\alpha}}{\sigma_a^2+\sigma_s^2},
	\end{align*}
	because $f'\in \cF_W$ and, therefore, $|f''(\qbar_\Sigma)|\leq 1$ with probability 1.
	
	For the third term we use that $f'\in\cF_W$, that $\ubar_i\leq\smax$ for all $i\in[N]$ and Lemma \ref{lemma:Eu}. We obtain,
	\begin{align*}
	\dfrac{1}{N\left(\sigma_a^2+\sigma_s^2\right)} \left|\E{\left(\sum_{i=1}^N \ubar_i\right)^2 f''\left(\qbar_\Sigma\right)} \right| & \leq \dfrac{\smax N^{1-\alpha}}{\sigma_a^2+\sigma_s^2}.
	\end{align*}
	Similarly, for the fourth term we obtain
	\begin{align*}
	\dfrac{2}{N\left(\sigma_a^2 + \sigma_s^2 \right)}\left|\E{\left(\abar-\sum_{i=1}^N\sbar_i\right)\left(\sum_{i=1}^N \ubar_i\right) f''(\qbar_\Sigma)} \right|\leq \dfrac{2\smax N^{1-\alpha}}{\sigma_a^2+\sigma_s^2}.
	\end{align*}

	Finally, for the last term we obtain
	\begin{align*}
		& \dfrac{2}{3 N^{1+3\alpha}(\sigma_a^2+\sigma_s^2)^2} \left|\E{\left(\abar - \sum_{i=1}^N \sbar_i + \sum_{i=1}^N \ubar_i\right)^3 f'''(\xi)}\right| \\
		&\leq \dfrac{2}{3 N^{1+3\alpha}(\sigma_a^2+\sigma_s^2)^2} \E{\left|\abar - \sum_{i=1}^N \sbar_i + \sum_{i=1}^N \ubar_i\right|^3 |f'''(\xi)|} \\
		&\stackrel{(a)}{\leq} \dfrac{4}{3 N^{1+3\alpha}(\sigma_a^2+\sigma_s^2)^2} \E{\left|\abar - \sum_{i=1}^N \sbar_i + \sum_{i=1}^N \ubar_i\right|^3} \\
		&\stackrel{(b)}{\leq} \dfrac{4N^{2-3\alpha}}{3(\sigma_a^2+\sigma_s^2)^2} \left(\amax + 2\smax\right)^3
	\end{align*}
	where $(a)$ holds because $f'\in\cF_W$ and, hence, $f'''(\xi)|\leq 2$; and $(b)$ holds because $0\leq \ubar_i\leq \sbar_i\leq \smax$ and $\abar\leq N\amax$ with probability 1.
	
	Putting everything together, we obtain
	\begin{align*}
	& \left|\E{f'\left(\qbar_\Sigma \right) - f''\left(\qbar_\Sigma\right) }  \right| \\
	& \leq \dfrac{1}{\sigma_a^2+\sigma_s^2}\left(5\smax N^{1-\alpha} + N^{1-2\alpha} + C \smax(\alpha-1) N^{4-\alpha} \left\lceil\log\left(N\right)\right\rceil N^{\frac{1}{\left\lceil\log\left(N\right)\right\rceil}} \right. \\
	&\qquad \qquad \qquad \left. +  \dfrac{4N^{2-3\alpha}}{3(\sigma_a^2+\sigma_s^2)} \left(\amax + 2\smax\right)^3 \right),
	\end{align*}
	which proves the theorem.  
\Halmos \endproof

\section{Conclusion and future work}\label{sec:conclusion}

In this paper we study the load balancing system operating under JSQ in the many-server heavy-traffic regime. We parametrize the arrival rate so that the arrival rate \emph{per server} is $N^{-\alpha}$, for $\alpha>0$. Specifically, we answer the question: how fast should the number of servers grow with respect to the load to observe the classical heavy-traffic behavior of the average queue lengths? We show that $\alpha>4$ is sufficient to satisfy this condition. We use two proof techniques: one based on the Transform method proposed in \cite{Hurtado_transform_method} and the other one based on Stein's method. 

The case of $\alpha\leq 1$ is well studied in the literature. Then, there is a gap between our results and the literature. Future work is to explore how the system behaves if $\alpha\in(1,4]$. It is possible that our bounds are not tight and the same classical heavy-traffic behavior can be proved in these intervals. 
In both of our proofs, the value of $\alpha$ is determined so that the terms related to SSC vanish in the limit. One line of future work is to study if it is possible to obtain tighter SSC bounds, so that we can improve the value of $\alpha$. 
However, there may also be another phase transition in these intervals, in which case a different proof technique might be required. 


\newpage 

\renewcommand{\theHsection}{A\arabic{section}}
\begin{APPENDICES}

\section{Moment bounds based on drift arguments}\label{app:bertsimas_lemma}

The proof of Proposition \ref{prop:ssc} is based on the following lemma. 

\begin{lemma}\label{lemma:bertsimas}
	Consider an irreducible and aperiodic Markov chain $\left\{X(k):k\geq 1 \right\}$ over a countable state space $\cX$. Let $Z:\cX\to \bR_+$ be a nonnegative valued Lyapunov function, and suppose its drift satisfies the following conditions
	\begin{enumerate}[label=(C\arabic*)]
		\item There exists $\eta>0$ and $\kappa<\infty$ such that for any $k\in\bZ_+$ and for all $x\in \cX$ that satisfies $Z(x)\geq \kappa$,
		\begin{align*}
		\E{\left. \Delta Z(x)\right| X(k)=x}\leq -\eta
		\end{align*}
		
		\item There exists $D<\infty$ such that for all $x\in\cX$,
		\begin{align*}
		\Prob{|\Delta Z(x)|\leq D }=1.
		\end{align*}
	\end{enumerate}
	Further, assume that the Markov chain $\left\{X(k):k\in\bZ_+ \right\}$ converges in distribution to a random variable $\Xbar$. Then, for any $r\in\bZ_+$ with $r\geq 1$,
	\begin{align}\label{eq:hajek.moments}
	\E{Z(\Xbar)^r}\leq (2\kappa)^r + (4D)^r\left(\dfrac{D+\eta}{\eta}\right)^r r!.
	\end{align}

Additionally, for any $\theta^* \leq \frac{1}{2D}\log\left(1+\tfrac{\eta}{D}\right)$ we have
\begin{align}\label{eq:hajek.mgf}
	\E{e^{\theta^* Z(\Xbar)}}\leq e^{\theta^*\kappa }\left(\frac{\eta}{\eta + D (1-e^{2\theta^*D})}\right).
\end{align}
\end{lemma}

Equation \eqref{eq:hajek.moments} was proved in \cite[Lemma 3]{MagSri_SSY16_Switch}. The proof of \eqref{eq:hajek.mgf} follows similar steps, and we omit it for brevity.

Now we present the proof of Proposition \ref{prop:ssc}.
\proof{Proof of Proposition \ref{prop:ssc}.}
We use the Lyapunov function $W_\perp(\vq)=\left\|\vq_\perp\right\|$. For ease of exposition we omit the superscript $\pN$ of the variables.

We first prove that condition $(C2)$ in Lemma \ref{lemma:bertsimas} is satisfied. We have
\begin{align*}
	\left|\Delta W_\perp (\vq) \right| &= \left|\left\|\vq_\perp(k+1)\right\|- \left\|\vq_\perp(k)\right\| \right| \ind{\vq(k)=\vq} \\
	& \stackrel{(a)}{\leq} \left\|\vq_\perp(k+1)-\vq_\perp(k) \right\| \ind{\vq(k)=\vq} \\
	& \stackrel{(b)}{\leq} \left(\left\|\vq(k+1)-\vq(k)\right\| + \|\vq_\parallel(k+1)-\vq_\parallel(k) \| \right) \ind{\vq(k)=\vq} \\
	& \stackrel{(c)}{\leq} 2\left\|\vq(k+1)-\vq(k) \right\| \ind{\vq(k)=\vq} \\
	& \stackrel{(d)}{\leq} 2\sqrt{N}\max\{N\amax,\smax\} \\
	& \stackrel{(e)}{\leq} 2\amax N^{\frac{3}{2}},
\end{align*}
where $(a)$ holds by triangle inequality; $(b)$ holds because $\vq_\perp(k)=\vq(k)-\vq_\parallel(k)$ by definition, and by triangle inequality; $(c)$ holds because the projection is nonexpansive; $(d)$ holds by the dynamics of the queues presented in \eqref{eq:dynamics.max}, and because $a(k)\leq N \amax$ and $s_i(k)\leq \smax$ with probability 1 by assumption; and $(e)$ holds for $N$ large (it suffices $N\geq \frac{\smax}{\amax}$). Therefore, condition $(C2)$ is satisfied with $D = 2\amax N^{\frac{3}{2}}$.

Now we verify condition $(C1)$. We are interested in showing that $\Delta W_\perp(\vq)$ is upper bounded by a negative constant when $W_\perp(\vq) \geq \kappa$, where $\kappa>0$. Then, without loss of generality, we assume $W_\perp(\vq)>0$ in this part of the proof. Before proceeding we introduce the following notation. Let
\begin{align*}
	V(\vq) \defn \|\vq\|^2\quad \text{and}\quad V_\parallel(\vq)=\|\vq_\parallel\|^2,
\end{align*}
and observe that $W_\perp(\vq)=\sqrt{\left\|\vq_\perp \right\|^2}$. Then, since $f(x)=\sqrt{x}$ is a concave function, and by definition of $\vq_\parallel$ and $\vq_\perp$, we have
\begin{align}\label{eq:ssc.concave}
	\Delta W_\perp(\vq)\leq \dfrac{1}{2\|\vq_\perp\|}\left(\Delta V(\vq)-\Delta V_\parallel(\vq) \right).
\end{align}
Equation \eqref{eq:ssc.concave} was first proved in \cite{atilla}. Now we bound $\Eq{V(\vq)}$ and $\Eq{V_\parallel(\vq)}$ separately. We have
\begin{align*}
	& \Eq{\Delta V(\vq)}\\
	& = \Eq{\|\vq(k+1)\|^2- \|\vq(k)\|^2} \\
	& = \Eq{\|\vq(k+1)-\vu(k) + \vu(k)\|^2- \|\vq(k)\|^2} \\
	& \stackrel{(a)}{=} \Eq{\|\vq(k)+\va(k)-\vs(k)\|^2 + \|\vu(k)\|^2 + 2\langle \vq(k+1)-\vu(k),\vu(k)\rangle - \|\vq(k)\|^2 } \\
	& \stackrel{(b)}{=} \Eq{\|\vq(k)+\va(k)-\vs(k) \|^2 -\|\vu(k)\|^2 - \|\vq(k)\|^2} \\
	& \stackrel{(c)}{=} \Eq{\| \va(k)-\vs(k)\|^2 + 2\langle\vq(k),\va(k)-\vs(k)\rangle - \|\vu(k)\|^2} \\
	& \stackrel{(d)}{\leq} \Eq{\| \va(k)-\vs(k)\|^2 + 2\langle\vq(k),\va(k)-\vs(k)\rangle}, \numberthis \label{eq:SSC.Vq.partial}
\end{align*}
where $(a)$ holds by definition of the Euclidean norm and dot product, and by \eqref{eq:dynamics.u}; $(b)$ holds by \eqref{eq:qu} and rearranging terms; $(c)$ holds by definition of the Euclidean norm and rearranging terms; and $(d)$ holds because $\Eq{\|\vu(k)\|^2}\geq 0$.

Now we bound each of the terms. We start bounding $\E{\left\|\va(k)-\vs(k)\right\|^2}$. We have
\begin{align*}
	\Eq{\|\va(k)-\vs(k)\|^2} & \stackrel{(a)}{\leq} \Eq{\left\| \va(k)\right\|^2} + \E{\left\|\vs(k)\right\|^2} \\
	&\stackrel{(b)}{=} \E{a^2(k)} + \sum_{i=1}^n \E{s_i^2(k)} \\
	&\stackrel{(c)}{=} N^2(1-N^{-\alpha})^2 + N\sigma_a^2 + N + N\sigma_s^2 \\
	&\stackrel{(d)}{\leq} 2 N^2, \numberthis \label{eq:SSC.Vq.partial1}
\end{align*}
where $(a)$ holds expanding the square and because $\langle\va(k),\vs(k)\rangle\geq 0$; $(b)$ holds because all the arrivals in one time slot are routed to the same queue, and by definition of the Euclidean norm; $(c)$ holds by definition of variance; and $(d)$ holds for $N$ large (it suffices to take $N\geq \sigma_a^2+\sigma_s^2+1$). 

Now we bound the second term of \eqref{eq:SSC.Vq.partial}. Let $i^*\in\argmin_{i\in[N]} q_i$ be the queue where the arrivals are routed. Then,
\begin{align*}
	\Eq{\langle\vq,\va(k)-\vs(k)\rangle } &\stackrel{(a)}{=} q_{i^*}N(1-N^{-\alpha}) - \sum_{i=1}^{N} q_i \\
	&\stackrel{(b)}{=} -N^{-\alpha}\sum_{i=1}^N q_i + (1-N^{-\alpha}) \sum_{i=1}^N (q_{i^*}-q_i),
\end{align*}
where $(a)$ holds by definition of JSQ; and $(b)$ holds after adding and subtracting $(1-N^{-\alpha})\sum_{i=1}^N q_i$ and rearranging terms. Also notice that, by definition of $\delta$, we know $\delta \leq 1-N^{-\alpha}$ for all $N\geq N_0$. Then,
\begin{align*}
	(1-N^{-\alpha})\sum_{i=1}^N (q_{i^*}-q_i) & \leq -\delta \sum_{i=1}^N |q_{i^*}-q_i| \\
	&\stackrel{(a)}{\leq} -\delta \left\| \vq-q_{i^*}\vone\right\| \\
	&\stackrel{(b)}{\leq} -\delta \left\|\vq_\perp \right\|,
\end{align*}
where $(a)$ holds because norm-1 upper bounds the Euclidean norm; and $(b)$ holds because $\frac{1}{N}\sum_{i=1}^N q_i$ is the minimizer of the function $f(x)=\left\|\vq-x\vone\right\|$ by definition of projection, and because $\vq_\perp=\vq-\vq_\parallel$. Then,
\begin{align}\label{eq:SSC.Vq.partial2}
	\Eq{\langle \vq,\va(k)-\vs(k)\rangle} \leq -N^{-\alpha}\sum_{i=1}^N q_i -\delta \left\|\vq_\perp \right\|.
\end{align}

Then, using \eqref{eq:SSC.Vq.partial1} and \eqref{eq:SSC.Vq.partial2} in \eqref{eq:SSC.Vq.partial} we obtain
\begin{align}\label{eq:SSC.Vq}
	\Eq{\Delta V(\vq)} \leq 2N^2 -2\delta\left\|\vq_\perp\right\| - 2N^{-\alpha} \sum_{i=1}^{N} q_i.
\end{align}

Now we bound $\Eq{\Delta V_\parallel(\vq)}$. In the computation of this bound we follow similar steps to \cite{atilla} and \cite{MagSri_SSY16_Switch} in their proof of SSC. We present the complete proof for completeness. We have
\begin{align}
	\Eq{\Delta V_\parallel(\vq)} &= \Eq{\|\vq_\parallel(k+1)\|^2 - \|\vq_\parallel(k)\|^2} \nonumber \\
	& = \Eq{\langle \vq_\parallel(k+1)+\vq_\parallel(k), \vq_\parallel(k+1)-\vq_\parallel(k) \rangle} \nonumber \\
	& \stackrel{(a)}{=} \Eq{\|\vq_\parallel(k+1)-\vq_\parallel(k) \|^2} + 2\Eq{\langle\vq_\parallel(k), \vq_\parallel(k+1)-\vq_\parallel(k)\rangle} \nonumber \\
	& \stackrel{(b)}{\geq} 2\Eq{\langle\vq_\parallel(k), \vq_\parallel(k+1)-\vq_\parallel(k)\rangle} \nonumber \\
	& \stackrel{(c)}{=} 2\Eq{\langle \vq_\parallel(k), \vq(k+1)-\vq(k)\rangle} \nonumber \\
	& \stackrel{(d)}{\geq} 2\Eq{\langle\vq_\parallel(k),\va(k)-\vs(k) \rangle} \nonumber \\
	& \stackrel{(e)}{=} -2\left(\sum_{i=1}^N \frac{q_i}{N}\right)N^{1-\alpha} \nonumber \\
	& = -2 N^{-\alpha}\sum_{i=1}^N q_i, \label{eq:ssc.V_parallel-bound}
\end{align}
where $(a)$ holds by definition of the Euclidean norm; $(b)$ holds because the norm of any vector is nonnegative; $(c)$ holds because $\vq_\parallel(k)$ is orthogonal to $\vq_\perp(k+1)$ and to $\vq_\perp(k)$ by definition; $(d)$ holds by the dynamics of the queues presented in \eqref{eq:dynamics.u} and because, by definition, $\vq_\parallel(k)\geq \vzero$ and $\vu(k)\geq 0$ so their dot product is nonnegative; and $(e)$ holds because, by definition, $\qbar_{\parallel i}=\frac{1}{N}\sum_{j=1}^N \qbar_j$ for all $i\in[N]$ and because the arrival and service processes are independent of the queue lengths.

Then, using \eqref{eq:SSC.Vq} and \eqref{eq:ssc.V_parallel-bound} in \eqref{eq:ssc.concave} we obtain
\begin{align*}
	\Eq{W_\perp(\vq)} &\leq \dfrac{1}{2\left\|\vq_\perp\right\|}\left(2N^2 - \delta \left\|\vq_\perp\right\| \right) \\
	&= \dfrac{N^2}{\left\|\vq_\perp\right\|} - \delta.
\end{align*}
Hence, condition $(C1)$ is satisfied with
\begin{align*}
	\eta \defn\frac{\delta}{2} ,\quad \kappa\defn \dfrac{2 N^2}{\delta}
\end{align*}
for all $N\geq \tilde{N}$, with
\begin{align}\label{eq:SSC.N0}
	\tilde{N}\defn \max\left\{\frac{\smax}{\amax},\, \sigma_a^2+\sigma_s^2+1 \right\}.
\end{align}
Therefore, using \eqref{eq:hajek.moments} from Lemma \ref{lemma:bertsimas} we obtain that for each $r\in\bZ_+$ with $r\geq 1$ we have
\begin{align*}
	\E{\left\| \vq_\perp\right\|^r} &\leq \left(\dfrac{4N^2}{\delta}\right)^r + \left(2\amax N^{\frac{3}{2}}\right)^r\left(\dfrac{4\amax N^{\frac{3}{2}} + \delta}{\delta} \right)^r r! \\
	&\stackrel{(a)}{\leq}\tilde{C}^r N^{3r} r! \\
	&\stackrel{(b)}{\leq} \tilde{C}^r N^{3r} e^{1-r} r^{r+\frac{1}{2}},
\end{align*}
where $(a)$ holds for a finite constant $\tilde{C}$ that only depends on $\amax$ and $\delta$; and $(b)$ holds by Stirling's inequality for the factorial. Then, we get
\begin{align*}
	\Eq{\left\| \vq_\perp\right\|^r}^{\frac{1}{r}}\leq C N^3 r,
\end{align*}
where $C\defn \tilde{C} e^e$. This proves \eqref{eq:SSC.moments}. To compute \eqref{eq:SSC.mgf} we use \eqref{eq:hajek.mgf}. 
\Halmos \endproof

\section{Details of proofs of Section \ref{sec:MGF}}

\subsection{Proof of Lemma \ref{lemma:expo_qu}}\label{app:lemma-expo-qu}

We use the following lemma, which was proved in \cite[Lemma 12]{Hurtado_transform_method}. We state it here for completeness.

\begin{lemma}\label{lemma:qperp-u}
	Consider a load balancing system operating under JSQ, parametrized by $N$ as described in Section \ref{sec:model}. Then, for any $\zeta\in\bR$ and $k\in\bZ_+$ we have
	\begin{align*}
	\sum_{i=1}^N u_i^\pN(k)\left(\expo{\frac{\zeta}{N}\sum_{j=1}^N q_j^\pN(k+1)}-1\right) = \sum_{i=1}^N u_i^\pN(k)\left(\expo{-\zeta q_{\perp i}^\pN(k+1)}-1\right),
	\end{align*} 
	where $q_{\perp i}^\pN(k+1)$ is the $i\tth$ element of $\vq_\perp^\pN(k+1)$.
\end{lemma}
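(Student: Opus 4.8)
The plan is to prove this identity term by term, exploiting the complementary slackness relation \eqref{eq:qu}, namely $q_i^\pN(k+1)u_i^\pN(k)=0$ with probability $1$ for every $i\in[N]$. Throughout I omit the superscript $\pN$ for ease of exposition. The key observation is that the prefactor $u_i(k)$ appearing in each summand is nonzero only on those queues that are empty at the beginning of time slot $k+1$, and on exactly those queues the two exponents on either side of the asserted equality coincide.

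First I would introduce the shorthand $m\defn\frac{1}{N}\sum_{j=1}^N q_j(k+1)$ for the average queue length, so that by the definition of the projection in \eqref{eq:def-parallel} we have $q_{\perp i}(k+1)=q_i(k+1)-m$. Rewriting the exponents, the $i\tth$ summand on the left carries the factor $\expo{\zeta m}-1$, while the $i\tth$ summand on the right carries $\expo{-\zeta q_i(k+1)+\zeta m}-1$.

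Next I would fix an arbitrary $i\in[N]$ and split into two cases according to the value of $u_i(k)$. If $u_i(k)=0$, both summands vanish because of the common prefactor $u_i(k)$, so they trivially agree. If $u_i(k)>0$, then \eqref{eq:qu} forces $q_i(k+1)=0$, whence $q_{\perp i}(k+1)=-m$ and therefore $-\zeta q_{\perp i}(k+1)=\zeta m=\frac{\zeta}{N}\sum_{j=1}^N q_j(k+1)$; the two exponents are then equal and the summands agree. Summing the resulting per-$i$ equality over $i\in[N]$ yields the claimed identity.

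There is no serious obstacle here: the entire content is the complementary slackness identity \eqref{eq:qu}, which guarantees that $u_i(k)$ and $q_i(k+1)$ are never simultaneously positive. The only point requiring slight care is the bookkeeping of the projection, so that on the support of $u_i(k)$ the quantity $-\zeta q_{\perp i}(k+1)$ collapses exactly to the averaged exponent $\frac{\zeta}{N}\sum_{j=1}^N q_j(k+1)$ that appears on the left-hand side.
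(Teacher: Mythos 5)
Your proof is correct and is essentially the intended argument: this paper does not prove the lemma itself but cites \cite[Lemma 12]{Hurtado_transform_method_temp}, and the argument there is exactly your termwise case split, where on $\{u_i(k)>0\}$ the complementary slackness relation \eqref{eq:qu} forces $q_i(k+1)=0$, hence $q_{\perp i}(k+1)=-\frac{1}{N}\sum_{j=1}^N q_j(k+1)$ and the two exponents coincide (with probability~1, which is all \eqref{eq:qu} guarantees and all the lemma asserts). Nothing further is needed.
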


Now we prove Lemma \ref{lemma:expo_qu}.

\proof{Proof of Lemma \ref{lemma:expo_qu}.}
	We have
	\begin{align}
	& \left|\E{\left(\expo{\thetahat N^{-\alpha} \sum_{i=1}^N \qbar_i^+} -1\right)\left(\expo{-\thetahat N^{-\alpha}\sum_{i=1}^N \ubar_i} -1\right)} \right| \nonumber \\
	& \leq \E{\left|\left(\expo{\thetahat N^{-\alpha} \sum_{i=1}^N \qbar_i^+} -1\right)\left(\expo{-\thetahat N^{-\alpha}\sum_{i=1}^N \ubar_i} -1\right) \right|} \nonumber \\
	& \stackrel{(a)}{=} |\thetahat| N^{-\alpha}\E{\left(\sum_{i=1}^N \ubar_i\right)\left|\expo{\thetahat N^{-\alpha} \sum_{i=1}^N \qbar_i^+} -1 \right| \left(\dfrac{\expo{-\thetahat N^{-\alpha}\sum_{i=1}^N \ubar_i} -1}{-\thetahat N^{-\alpha}\sum_{i=1}^N \ubar_i}\right)\ind{\sum_{i=1}^N \ubar_i\neq 0} } \nonumber \\
	& \stackrel{(b)}{\leq} |\thetahat| N^{-\alpha} \left(\dfrac{\expo{-\thetahat N^{-\alpha}\smax}-1}{-\thetahat N^{-\alpha} \smax} \right) \E{\sum_{i=1}^N \ubar_i\left|\expo{\thetahat N^{-\alpha} \sum_{j=1}^N \qbar_j^+}-1 \right|} \nonumber \\
	& \stackrel{(c)}{\leq} |\thetahat| N^{-\alpha} \left(\dfrac{\expo{-\thetahat N^{-\alpha}\smax}-1}{-\thetahat N^{-\alpha} \smax} \right) \E{\sum_{i=1}^N \ubar_i\left|\expo{-\thetahat N^{1-
				\alpha}\qbar_{\perp i}}-1 \right|} \nonumber \\
	& \stackrel{(d)}{\leq}  |\thetahat| N^{-\alpha} \left(\dfrac{\expo{-\thetahat N^{-\alpha}\smax}-1}{-\thetahat N^{-\alpha} \smax} \right) \E{\sum_{i=1}^N \ubar_i^p}^{\frac{1}{p}} \E{\sum_{i=1}^N \left|\expo{-\thetahat N^{1-\alpha}\qbar_{\perp i}}-1 \right|^{\frac{p}{p-1}}}^{\frac{p-1}{p}} \label{eq:lemma.expo.qu.partial}
	\end{align}
	where $p>1$ is an integer number. Here $(a)$ holds after multiplying and dividing by $|\thetahat| N^{-\alpha}\sum_{i=1}^N\ubar_i$, because if $\sum_{i=1}^N \ubar_i=0$ then $\expo{\thetahat N^{-\alpha}\sum_{i=1}^N \ubar_i} -1=0$, and because the function $f(x)=\frac{e^x-1}{x}$ is nonnegative; $(b)$ holds because the function $f(x)=\frac{e^x-1}{x}$ is nonnegative and increasing, and because $0\leq\ubar_i\leq\smax$ with probability 1 for all $i\in[N]$; $(c)$ holds by Lemma \ref{lemma:qperp-u} and the triangle inequality; and $(d)$ holds by Hölder's inequality.
	
	We analyze each expression in \eqref{eq:lemma.expo.qu.partial} separately. First observe that
	\begin{align*}
	\lim_{N\to\infty}\dfrac{\expo{-\thetahat N^{-\alpha}\smax}-1}{-\thetahat N^{-\alpha} \smax} = 1.
	\end{align*}
	
	The unused service is nonnegative by definition, then
	\begin{align*}
	0\leq \E{\sum_{i=1}^N \ubar_i^p} & \stackrel{(a)}{\leq} \smax^{p-1}\,\E{\sum_{i=1}^N \ubar_i}  \stackrel{(b)}{=} \smax^{p-1}N^{1-\alpha}
	\end{align*}
	where $(a)$ holds because $\ubar_i\leq \smax$ with probability 1 for all $i\in[N]$; and $(b)$ holds by Lemma \ref{lemma:Eu}.
	
	For the last term we use Hölder's inequality again. Let $r>1$. Then, for each $i\in[N]$ we have
	\begin{align*}
	& \E{\left|\expo{-\thetahat N^{1-\alpha}\qbar_{\perp i}}-1 \right|^{\frac{p}{p-1}}} \\
	& = |\thetahat|^{\frac{p}{p-1}}N^{\frac{p}{p-1}(1-\alpha)} \E{\left(\dfrac{\expo{-\thetahat N^{1-\alpha}\qbar_{\perp i}}-1}{-\thetahat N^{1-\alpha}\qbar_{\perp i}} \right)^{\frac{p}{p-1}} \left|\qbar_{\perp i}\right|^{\frac{p}{p-1}}\ind{\qbar_{\perp i}\neq 0}} \\
	& \stackrel{(a)}{\leq} \thetahat^{\frac{p}{p-1}}N^{\frac{p}{p-1}(1-\alpha)} \left(\E{\left|\dfrac{\expo{-\thetahat N^{1-\alpha} \qbar_{\perp i}}-1}{-\thetahat N^{1-\alpha} \qbar_{\perp i}} \right|^{\left(\frac{p}{p-1}\right)\left(\frac{r}{r-1}\right)}\ind{\qbar_{\perp i}\neq 0}}\right)^{\frac{r-1}{r}} \left(\E{|\qbar_{\perp i}|^{\frac{p}{p-1}r}} \right)^{\frac{1}{r}},
	\end{align*}
	where $(a)$ holds by Hölder's inequality. We bound each of these terms.
	
	Using Proposition \ref{prop:ssc} for the last term we obtain
	\begin{align}\label{eq:bound-SSC}
	0\leq \E{|\qbar_{\perp i}|^{\frac{p}{p-1}r}}^{\frac{p-1}{pr}} \stackrel{(a)}{\leq} \E{\|\vqbar_\perp\|^{\frac{p}{p-1}r}}^{\frac{p-1}{pr}}\leq CN^3 \left(\dfrac{pr}{p-1}\right) ,
	\end{align}
	where $(a)$ holds if $\frac{p}{p-1}r\geq 2$, by the inequalities between norms. 
	
	On the other hand, since $\frac{e^x-1}{x}\leq e^{|x|}$, we obtain
	\begin{align}\label{eq:SSC-expo-bound-proof}
		\left(\E{\left|\dfrac{\expo{-\thetahat N^{1-\alpha} \qbar_{\perp i}}-1}{-\thetahat N^{1-\alpha} \qbar_{\perp i}} \right|^{\left(\frac{p}{p-1}\right)\left(\frac{r}{r-1}\right)}\ind{\qbar_{\perp i}\neq 0}}\right)^{\frac{r-1}{r}} \leq \left(\E{\expo{|\thetahat| N^{1-\alpha} \left(\frac{r-1}{r}\right)\left(\frac{p-1}{p}\right) |\qbar_{\perp i}|}} \right)^{\frac{r-1}{r}},
	\end{align}
	and the last term can be bounded similarly to \eqref{eq:bound-SSC}, using the second part of Proposition \ref{prop:ssc} for 
	\begin{align*}
		|\thetahat| \left(\frac{r-1}{r}\right)\left(\frac{p-1}{p}\right) \leq \frac{1}{4\amax N^{\frac{5}{2}-\alpha}} \log\left(1+\frac{\delta}{4\amax N^{\frac{3}{2}}} \right).
	\end{align*}
	Observe that the right-hand side grows to infinity as $N\to\infty$. Then, there exists $N_0^*\in\bZ_+$ such that the lemma is satisfied with
	\begin{align*}
		\tilde{\Theta} \defn \left(\frac{r}{r-1}\right)\left(\frac{p}{p-1}\right) \left(\frac{1}{4\amax (N_0^*)^{\frac{5}{2}-\alpha}}\right) \log\left(1+\frac{\delta}{4\amax (N_0^*)^{\frac{3}{2}}} \right).
	\end{align*}
	Further, the upper bound in \eqref{eq:SSC-expo-bound-proof} converges to a constant as $N\to\infty$. We omit the details for brevity.
	
	Putting everything together in \eqref{eq:lemma.expo.qu.partial} we obtain
	\begin{align*}
	\left|\E{\left(\expo{\thetahat N^{-\alpha} \sum_{i=1}^N \qbar_i^+} -1\right)\left(\expo{-\thetahat N^{-\alpha}\sum_{i=1}^N \ubar_i} -1\right)} \right|\leq L(N)\, N^{4-2\alpha + \frac{1-\alpha}{p}},
	\end{align*}
	where $L(N)$ is of order $O(1)$. Finally, observe that $4-2\alpha + \frac{1-\alpha}{p}<1-2\alpha$ if and only if $\alpha>3p+1$, and $p$ can be taken as close to one as desired. Therefore, the lemma holds for all $\alpha>4$.  
\Halmos \endproof

\section{Details of proofs in Section \ref{sec:Stein}.}

\subsection{Proof of \eqref{claim:stein-qperp}} \label{app:claim-stein}

\proof{Proof of \eqref{claim:stein-qperp}.}
	We need to find an upper bound for $\E{\langle\vqbar_\parallel,\vubar_\parallel\rangle}$. By definition of $\vqbar_\parallel$ and $\vu_\parallel$ we have
	\begin{align}
	\langle\vqbar_\parallel,\vubar_\parallel\rangle & \stackrel{(a)}{=} \langle \vqbar_\parallel,\vubar\rangle \nonumber\\
	& \stackrel{(b)}{=} \langle\vqbar,\vubar\rangle - \langle\vqbar_\perp,\vubar\rangle \label{eq:stein-q-perp-0}
	\end{align}
	where $(a)$ holds because $\vqbar_\parallel$ is orthogonal to $\vubar_\perp$; and $(b)$ by the definition of $\vqbar_\perp$ according to \eqref{eq:def-parallel}. We analyze each term separately. Observe that, by definition of dot product, we have
	\begin{align}
	\E{\langle\vqbar,\vubar\rangle} & = \E{\sum_{i=1}^N \qbar_i\ubar_i} \stackrel{(a)}{\leq} \smax\E{\sum_{i=1}^N \ubar_i} \stackrel{(b)}{=} \smax N^{1-\alpha}, \label{eq:stein-q-perp-1}
	\end{align}
	where $(a)$ holds because if $\ubar_i=\upsilon>0$, then by \eqref{eq:dynamics.u} we have $\qbar_i^+=0$, which implies $\qbar_i=\sbar_i-\abar_i-\upsilon\leq \sbar_i\leq \smax$; and $(b)$ holds by Lemma \ref{lemma:Eu}.
	
	To bound the second term we use SSC as proved in Proposition \ref{prop:ssc}. By H\"older's inequality, we obtain
	\begin{align*}
	\left|\E{\langle\vqbar_\perp,\vubar\rangle}\right| & \leq \E{\left\|\vqbar_\perp \right\|_r^r}^{\tfrac{1}{r}} \E{\left\|\vubar \right\|_{r^*}^{r^*}}^{\tfrac{1}{r^*}},
	\end{align*}
	where $r,r^*>1$ and $\frac{1}{r}+\frac{1}{r^*}=1$. On one hand, for $r\geq 2$ we have
	\begin{align*}
	\E{\left\|\vqbar_\perp \right\|_r^r}^{\frac{1}{r}} \leq \E{\left\|\vqbar_\perp \right\|^r}^{\frac{1}{r}} \leq CN^3 r,
	\end{align*}
	where the last inequality holds by Proposition \ref{prop:ssc}. On the other hand, since $r^*>1$ we have
	\begin{align*}
	\E{\left\|\vubar \right\|^{r^*}_{r^*}}^{\frac{1}{r^*}} & = \E{\sum_{i=1}^N \ubar_i^{r^*}}^{\frac{1}{r^*}} \\
	& \stackrel{(a)}{\leq} \smax^{\frac{1}{r}} \E{\sum_{i=1}^N \ubar_i}^{1-\frac{1}{r}} \\
	& \stackrel{(b)}{=} \smax^{\frac{1}{r}} N^{(1-\alpha)\left(1-\frac{1}{r}\right)} \\
	&\stackrel{(c)}{\leq} \smax N^{(1-\alpha)\left(1-\frac{1}{r}\right)} , \\
	\end{align*}
	where $(a)$ holds because $\ubar_i\leq\sbar_i\leq\smax$ for all $i\in[N]$ by definition of unused service and because $\frac{1}{r}+\frac{1}{r^*}=1$; $(b)$ holds by Lemma \ref{lemma:Eu}; and $(c)$ because $\smax\geq 1$ and $r>1$. Therefore, we have
	\begin{align}
		\left|\E{\langle\vqbar_\perp,\vubar \rangle}\right| & \leq C\smax N^{4-\alpha} r N^{\frac{1}{r}(\alpha-1)}\label{eq:stein-bound-q-perp-r}.
	\end{align}

	Then, we minimize the upper bound \eqref{eq:stein-bound-q-perp-r} with respect to $r$ and we obtain that the minimizer is $r=(\alpha-1)\log\left(N\right)$. However, the value of $r$ must be integer. Therefore, we use $r=\left\lceil\alpha-1\right\rceil\left\lceil\log\left(N\right)\right\rceil$. Replacing this result in \eqref{eq:stein-bound-q-perp-r} we obtain
	\begin{align}\label{eq:stein-q-perp-2}
		\left|\E{\langle\vqbar_\perp,\vubar \rangle}\right| & \leq C \smax\left\lceil\alpha-1\right\rceil N^{4-\alpha} \left\lceil\log\left(N\right)\right\rceil N^{\frac{1}{\left\lceil\log\left(N\right)\right\rceil}}.
	\end{align}
	Putting \eqref{eq:stein-q-perp-1} and \eqref{eq:stein-q-perp-2} together we obtain the result. 
\Halmos \endproof

\end{APPENDICES}

%
%

\bibliographystyle{informs2014}
\bibliography{biblio-ok}

\begin{thebibliography}{28}
\providecommand{\natexlab}[1]{#1}
\providecommand{\url}[1]{\texttt{#1}}
\providecommand{\urlprefix}{URL }

\bibitem[{Atar(2012)}]{Atar_NDS}
Atar R (2012) A diffusion regime with nondegenerate slowdown. \emph{Operations
  Research} 60(2):490--500.

\bibitem[{Banerjee \protect\BIBand{}
  Mukherjee(2019)}]{BanMukh_JSQ_tail_asymptotics}
Banerjee S, Mukherjee D (2019) Join-the-shortest queue diffusion limit in
  {H}alfin--{W}hitt regime: {T}ail asymptotics and scaling of extrema.
  \emph{The Annals of Applied Probability} 29(2):1262--1309.

\bibitem[{Banerjee \protect\BIBand{} Mukherjee(2020)}]{BanMukh_JSQ_sensitivity}
Banerjee S, Mukherjee D (2020) Join-the-shortest queue diffusion limit in
  {H}alfin--{W}hitt regime: {S}ensitivity on the heavy-traffic parameter.
  \emph{The Annals of Applied Probability} 30(1):80--144.

\bibitem[{Braverman(2020)}]{Braverman2020_jsq}
Braverman A (2020) Steady-state analysis of the join-the-shortest-queue model
  in the {H}alfin--{W}hitt regime. \emph{Mathematics of Operations Research} .

\bibitem[{Braverman \protect\BIBand{} Dai(2017)}]{braverman2017stein2}
Braverman A, Dai J (2017) Stein's method for steady-state diffusion
  approximations of {M/Ph/n+ M} systems. \emph{The Annals of Applied
  Probability} 27(1):550--581.

\bibitem[{Braverman et~al.(2017)Braverman, Dai, \protect\BIBand{}
  Feng}]{braverman2017stein}
Braverman A, Dai J, Feng J (2017) Stein's method for steady-state diffusion
  approximations: {A}n introduction through the {E}rlang-{A} and {E}rlang-{C}
  models. \emph{Stochastic Systems} 6(2):301--366.

\bibitem[{Ephremides et~al.(1980)Ephremides, Varaiya, \protect\BIBand{}
  Walrand}]{ephremides1980simple}
Ephremides A, Varaiya P, Walrand J (1980) A simple dynamic routing problem.
  \emph{IEEE Transactions on Automatic Control} 25(4):690--693.

\bibitem[{Eryilmaz \protect\BIBand{} Srikant(2012)}]{atilla}
Eryilmaz A, Srikant R (2012) Asymptotically tight steady-state queue length
  bounds implied by drift conditions. \emph{Queueing Systems} 72(3-4):311--359,
  ISSN 0257-0130.

\bibitem[{Eschenfeldt \protect\BIBand{} Gamarnik(2018)}]{Gamarnik_JSQ}
Eschenfeldt P, Gamarnik D (2018) Join the {S}hortest {Q}ueue with many servers.
  {T}he heavy-traffic asymptotics. \emph{Mathematics of Operations Research}
  43(3):867--886.

\bibitem[{Foschini \protect\BIBand{} Salz(1978)}]{JSQ_HT_optimality}
Foschini G, Salz J (1978) A basic dynamic routing problem and diffusion.
  \emph{IEEE Transactions on Communications} 26(3):320--327.

\bibitem[{Gaunt \protect\BIBand{} Walton(2020)}]{Walton_SteinHT}
Gaunt R, Walton N (2020) Stein's method for the single server queue in heavy
  traffic. \emph{Statistics \& Probability Letters} 156:108566.

\bibitem[{Gupta \protect\BIBand{} Walton(2019)}]{GupWal_NDS_JSQ}
Gupta V, Walton N (2019) Load balancing in the {N}ondegenerate {S}lowdown
  {R}egime. \emph{Operations Research} 67(1):281--294.

\bibitem[{Gurvich(2014)}]{gurvich2014diffusion}
Gurvich I (2014) Diffusion models and steady-state approximations for
  exponentially ergodic {M}arkovian queues. \emph{The Annals of Applied
  Probability} 24(6):2527--2559.

\bibitem[{Halfin \protect\BIBand{} Whitt(1981)}]{HalfinWhitt_Regime}
Halfin S, Whitt W (1981) Heavy-traffic limits for queues with many exponential
  servers. \emph{Operations research} 29(3):567--588.

\bibitem[{Hurtado-Lange \protect\BIBand{}
  Maguluri(2020)}]{Hurtado_transform_method}
Hurtado-Lange D, Maguluri ST (2020) Transform methods for heavy-traffic
  analysis. \emph{Stochastic Systems} 10(4):275--309.

\bibitem[{Liu \protect\BIBand{}
  Ying(2019{\natexlab{a}})}]{LiuLei_JSQ_UniversalScaling}
Liu X, Ying L (2019{\natexlab{a}}) \emph{On Universal Scaling of Distributed
  Queues under Load Balancing}, arXiv preprint arXiv:1912.11904.

\bibitem[{Liu \protect\BIBand{} Ying(2019{\natexlab{b}})}]{liu2018simple}
Liu X, Ying L (2019{\natexlab{b}}) A simple steady-state analysis of load
  balancing algorithms in the sub-{H}alfin-{W}hitt regime. \emph{ACM SIGMETRICS
  Performance Evaluation Review} 46(2):15--17.

\bibitem[{Maguluri \protect\BIBand{} Srikant(2016)}]{MagSri_SSY16_Switch}
Maguluri ST, Srikant R (2016) Heavy traffic queue length behavior in a switch
  under the {M}ax{W}eight algorithm. \emph{Stochastic Systems} 6(1):211--250,
  \urlprefix\url{http://dx.doi.org/10.1214/15-SSY193}.

\bibitem[{Mitzenmacher(1996)}]{mitzenmacher_po2}
Mitzenmacher M (1996) Load balancing and density dependent jump {M}arkov
  processes. \emph{{FOCS}}, 213 (IEEE).

\bibitem[{Mitzenmacher(2001)}]{mitzenmacher_po2_2}
Mitzenmacher M (2001) The power of two choices in randomized load balancing.
  \emph{IEEE Transactions on Parallel and Distributed Systems}
  12(10):1094--1104.

\bibitem[{Ross(2011)}]{Ross2011_SteinSurvey}
Ross N (2011) Fundamentals of {S}tein's method. \emph{Probability Surveys}
  8:210--293.

\bibitem[{Stein(1972)}]{stein1972bound}
Stein C (1972) A bound for the error in the normal approximation to the
  distribution of a sum of dependent random variables. \emph{Proceedings of the
  Sixth Berkeley Symposium on Mathematical Statistics and Probability, Volume
  2: Probability Theory} (The Regents of the University of California).

\bibitem[{van~der Boor et~al.(2018)van~der Boor, Borst, van Leeuwaarden,
  \protect\BIBand{} Mukherjee}]{load_balancing_survey}
van~der Boor M, Borst S, van Leeuwaarden J, Mukherjee D (2018) Scalable load
  balancing in networked systems: A survey of recent advances. \emph{arXiv
  preprint arXiv:1806.05444} .

\bibitem[{Vvedenskaya et~al.(1996)Vvedenskaya, Dobrushin, \protect\BIBand{}
  Karpelevich}]{dobrushin_po2}
Vvedenskaya N, Dobrushin R, Karpelevich F (1996) Queueing system with selection
  of the shortest of two queues: An asymptotic approach. \emph{Problems of
  Information Transmission} 32(1):15--27.

\bibitem[{Weber(1978)}]{weber1978optimal}
Weber R (1978) On the optimal assignment of customers to parallel servers.
  \emph{Journal of Applied Probability} 15(2):406--413.

\bibitem[{Weng \protect\BIBand{} Wang(2020)}]{Weina_pod_zero_delay}
Weng W, Wang W (2020) Dispatching parallel jobs to achieve zero queuing delay.
  \emph{arXiv preprint arXiv:2004.02081} .

\bibitem[{Wheeden(2015)}]{Wheeden2015_RealAnalysis}
Wheeden RL (2015) \emph{Measure and integral: an introduction to real
  analysis}, volume 308 (CRC press).

\bibitem[{Winston(1977)}]{winston_JSQ_1977}
Winston W (1977) Optimality of the shortest line discipline. \emph{Journal of
  Applied Probability} 14(1):181–189,
  \urlprefix\url{http://dx.doi.org/10.1017/S0021900200104772}.

\end{thebibliography}

\end{document}